\title{Automorphism Groups of Schur Rings}
\author{Brent Kerby}
\DeclareMathOperator{\Aut}{Aut} 
 \DeclareMathOperator{\Char}{Char}
\DeclareMathOperator{\Sym}{Sym}
\newtheorem{thm}{Theorem}[section]
\newtheorem{conj}[thm]{Conjecture}
\newtheorem{lemma}[thm]{Lemma}
\newtheorem{cor}[thm]{Corollary}
\newtheorem*{mainthm}{Theorem \ref{mainthm}}
\newtheorem*{mainthm2}{Theorem \ref{autcyc}}
\newtheorem*{mainthm3}{Theorem \ref{thm_conv}}
\theoremstyle{definition}
\newtheorem{defn}[thm]{Definition}
\newtheorem{example}[thm]{Example}
\newtheorem{problem}[thm]{Problem}
\theoremstyle{remark}
\newtheorem*{remark}{Remark}
\newcommand{\h}[1]{\mathbf{#1}}
\newcommand{\q}[1]{\overline{#1}}
\newcommand{\m}[1]{\mathcal{#1}}
\newcommand{\Z}{\mathbb Z}
\begin{document}
\maketitle

\begin{center}
Brigham Young University,
Department of Mathematics,
Provo, UT 84604 
\end{center}

\begin{abstract}
In 1993, Muzychuk \cite{muzychuk} showed that the rational Schur rings over a cyclic
group $Z_n$ are in one-to-one correspondence with sublattices of the
divisor lattice of $n$, or equivalently, with sublattices of the
lattice of subgroups of $Z_n$. This can easily be extended to show
that for any finite group $G$, sublattices of the lattice of
characteristic subgroups of $G$ give rise to rational Schur rings over
$G$ in a natural way. Our main result is that any finite group may
be represented as the (algebraic) automorphism group of such a rational Schur ring
over an abelian $p$-group, for any odd prime $p$. In contrast, we show that over a cyclic group the automorphism group of any Schur ring is abelian. We also prove a converse
to the well-known result of Muzychuk \cite{muzychuk2} that two Schur rings over a cyclic
group are isomorphic if and only if they coincide; namely, we show
that over a group which is not cyclic, there always exist distinct
isomorphic Schur rings.
\end{abstract}

Schur rings (or S-rings), first developed by I. Schur in \cite{schur} in 1933 as a tool for investigating permutation groups, have subsequently found several applications in graph theory and algebraic combinatorics. For a survey of the recent developments in S-rings and their applications, see \cite{muzychuk_ponomarenko}. All of the definitions and elementary results that we need will be reviewed in \S\ref{sec_def}.

A great deal of effort has been focused on obtaining a classification of S-rings over cyclic groups, which was achieved in \cite{leung_man,leung_man2}. After the successful classification of S-rings over cyclic groups, a logical next step has been to seek classifications for S-rings over broader families of groups. This, however, appears to be a very difficult problem even in the case, for instance, of abelian $p$-groups. One approach is to restrict attention to special types of S-rings, in particular rational S-rings, in the hope that understanding these may shed light on the general problem. In \S\ref{sec_rat} we describe a general construction which, for each sublattice of the lattice of characteristic subgroups of a group $G$, produces a corresponding rational S-ring over $G$. This construction generalizes the one in \cite{muzychuk} used to classify rational S-rings over cyclic groups. In \S\ref{sec_auto} we review the structure of the lattice of characteristic subgroups over abelian $p$-groups. This enables us to explicitly describe the construction of many rational S-rings over such groups, which we will need for our main result in \S\ref{sec_main}.

We consider the following question: Which groups can occur as the automorphism group of an S-ring? It is clear that the automorphism group of a full group ring $\mathbb{C}G$ (as an S-ring) is isomorphic to the automorphism group of $G$. Thus, any group which may be represented as the automorphism group of a group may also be represented as the automorphism group of an S-ring in a rather trivial way. It is known, however, that there are finite groups (for instance, $Z_3$) which are not isomorphic to the automorphism group of any group (see, e.g., \cite{devries}, or \cite[Theorem 3.6]{kerby_masters}). Nevertheless, every finite group occurs as the automorphism group of some S-ring, as our main result shows:

\begin{mainthm}[Main Theorem]
Let $p$ be an odd prime. Every finite group can be represented as the automorphism group of a rational S-ring over an abelian $p$-group.
\end{mainthm}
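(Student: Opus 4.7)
Given a finite group $H$, my plan is to exhibit an abelian $p$-group $G$ and a sublattice $L$ of the lattice of characteristic subgroups of $G$ such that the rational S-ring $\m{S}(L)$ produced by the construction of \S\ref{sec_rat} has algebraic automorphism group isomorphic to $H$.

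The first step is to understand how combinatorial symmetries of $L$ give rise to algebraic automorphisms of $\m{S}(L)$. In the construction of $\m{S}(L)$, each basic set is attached to a cell of the partition of $G$ determined by $L$, so a symmetry of $L$ that is compatible with the $\Aut(G)$-structure should permute basic sets and extend to an algebra automorphism. I therefore expect a natural homomorphism from a combinatorially defined symmetry group of $L$ into $\Aut(\m{S}(L))$, and in favourable situations this map will be an isomorphism.

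The second step is a Frucht-type realization: using the explicit description of the lattice $\Char(G)$ for abelian $p$-groups from \S\ref{sec_auto}, I design $L$ so that its symmetry group is exactly $H$. The lattice $\Char(G)$ becomes rich when $G=\bigoplus_i Z_{p^{a_i}}$ has a varied partition of exponents, and within it one can embed sublattices of arbitrarily prescribed automorphism type. Concretely, I would first realize $H$ as the automorphism group of some finite combinatorial gadget (for instance a finite lattice, by a classical construction of Birkhoff's flavour), and then encode the gadget inside $\Char(G)$ by attaching to each of its vertices a characteristic subgroup whose type invariants distinguish it from the others.

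The third and hardest step is to certify that no extra symmetries appear, so that $\Aut(\m{S}(L))$ equals $H$ rather than a strict overgroup. Spurious algebraic automorphisms could in principle arise from accidental coincidences in basic-set sizes or in the local structure of partition cells. I would rule them out by padding $L$ with rigidifying elements forcing any algebra automorphism of $\m{S}(L)$ to preserve the combinatorial encoding of the gadget, and hence to descend to an element of $H$. I expect this rigidity argument to be the main obstacle. The hypothesis that $p$ is odd is likely to enter precisely here: since $\Aut(Z_{p^a})$ is cyclic for odd $p$, the permutation constraints on basic sets are clean, whereas for $p=2$ the parasitic $Z_2$-factor in $\Aut(Z_{2^a})$ when $a\ge 3$ would tend to produce extra symmetries that defeat the rigidification.
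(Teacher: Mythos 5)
Your high-level skeleton coincides with the paper's: realize the given group as $\Aut(\m D)$ for a finite distributive lattice $\m D$ (Birkhoff), embed $\m D$ into a boolean lattice via its join-irreducibles, embed that boolean lattice into $\Char(P)$ for $P=Z_p\times Z_{p^3}\times\cdots\times Z_{p^{2n-1}}$ (Theorem \ref{latbool}), and take the S-ring $F\q{\m L}$ of Theorem \ref{latsring}. But the step you yourself flag as ``the main obstacle'' --- showing $\Aut(F\q{\m L})\cong\Aut(\m L)$ --- is left unproved, and the mechanism you propose for it (padding $\m L$ with rigidifying elements) is neither needed nor the way the argument actually runs. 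The paper's surjectivity argument is direct: an S-ring automorphism $\gamma$ sends each $\q H$ to $\q K$ with $K$ a subgroup (Theorem \ref{isosimp}); rationality forces $K$ to be characteristic; the linear independence of $\q{\Char(P)}$ (Corollary \ref{charind}) forces $K\in\m L$; and the identities $\q H\circ\q K=\q{H\cap K}$ and $\q H\,\q K=|H\cap K|\,\q{HK}$ force the induced permutation of $\m L$ to respect meet and join. No rigidification is required. Your worry about ``accidental coincidences in basic-set sizes'' also has the logic reversed: the delicate size issue is not that coincidences create spurious automorphisms, but that in the \emph{other} direction --- extending a lattice automorphism $\beta$ to an algebra automorphism --- one must verify $|\beta(H)|=|H|$ so that the structure constants $|H\cap K|$ are preserved. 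This is exactly why Theorem \ref{latbool} is engineered so that $|\psi(Y)|$ depends only on $|Y|$, and why Theorem \ref{latsize} is needed.

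Your account of where the hypothesis $p\neq 2$ enters is also not the right one. It is not about $\Aut(Z_{2^a})$ failing to be cyclic and producing extra symmetries of the S-ring; it enters through the Miller--Baer theorem (Theorem \ref{charreg}) that for odd $p$ every characteristic subgroup of an abelian $p$-group is regular, which yields both the clean description of $\Char(P)$ used in Theorem \ref{latbool} and, crucially, the linear independence statement of Corollary \ref{charind} on which the surjectivity argument above rests. As written, your proposal is a plausible plan that matches the paper's architecture, but the two substantive verifications --- that lattice automorphisms induce algebra automorphisms (requiring the size-preservation of the embedding) and that every algebra automorphism arises this way (requiring rationality plus linear independence of characteristic subgroups) --- are missing, and the ideas you sketch in their place would not supply them.
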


At the core of the proof is an application of a theorem of Birkhoff that any finite group can be represented as the automorphism group of a finite distributive lattice. In \S\ref{sec_cyc} we use the Leung-Man classification of S-rings over cyclic groups to prove the following theorem, which shows that it is impossible to strengthen our main result by restricting to S-rings over cyclic groups:

\begin{mainthm2}
If $S$ is an S-ring over a cyclic group $Z_n$, then $\Aut(S)$ is abelian.
\end{mainthm2}

A remarkable theorem of Muzychuk states that over a cyclic group, two S-rings are isomorphic if and only if they coincide \cite{muzychuk2}. This fact plays a key role in several applications, including our proof of Theorem \ref{autcyc}. In \S\ref{sec_conv}, we prove a converse to this result, as follows:
\begin{mainthm3}
Let $G$ be a finite group which is not cyclic. Then there exist distinct Cayley-isomorphic S-rings $S_1$ and $S_2$ over $G$.
\end{mainthm3}

Finally, in \S\ref{sec_ex} we mention some interesting examples of rational S-rings over abelian $p$-groups which cannot be constructed using the method of \S\ref{sec_rat}. We do not yet entirely understand how to generalize these examples, but doing so may be a starting point for approaching the general problem of classifying rational S-rings over abelian $p$-groups, a problem which we believe shows some promise of being tractable.

\ \\
Note: Most of the results in this paper were included as part of the author's master's thesis  \cite{kerby_masters}, under the supervision of Stephen P. Humphries.

\section{Basic definitions}\label{sec_def}

Let $G$ be a finite group and $F$ a field. If $C$
is a subset of $G$, then we define $\q C$ to be the element $\sum_{g
\in C}g$ in the group algebra $FG$, and we call $\q C$ a \emph{simple quantity} of $FG$. Given a subset $C \subseteq G$ and an integer $m$, we define
$C^{(m)}=\{g^m : g \in C\}$. For any $x \in FG$, where $x =
\sum_{g\in G} a_gg$, we define $x^{(m)} = \sum_{g\in G}a_gg^m$.
Given a subset $X \subseteq FG$, we denote the subspace spanned by $X$ by
$FX$. If $\m L$ is a collection of subsets of $G$, we let $\q{\m L}$ denote the set $\{\q C : C \in \m L\}$.

A subalgebra $S$ of the group algebra $FG$ is called a \emph{Schur ring} (or \emph{S-ring}) over $G$ if there are disjoint nonempty subsets $T_1,\dots,T_n$ of $G$ such that $\q T_1,\dots, \q T_n$ form a basis for $S$, with the following properties,
\begin{enumerate}
\item[(i)] For every $i$ there is some $j$ such that $T_i^{(-1)}=T_j$.
\item[(ii)] $T_1=\{1\}$, and $G=T_1\cup T_2\cup \cdots \cup T_n$.
\end{enumerate}

The sets $T_1,\dots,T_n$ are called \emph{basic sets} of $S$ and are said to
form a \emph{Schur partition} of $G$. The corresponding $\q
T_1,\dots,\q T_n$ are called the \emph{basic quantities} of $S$. If $S$
satisfies condition (i), but perhaps not (ii), then $S$ is
called a \emph{pseudo S-ring} (or \emph{PS-ring}). The two sets $\{1\}$, $G\setminus\{1\}$ always form a Schur partition; the corresponding S-ring is called the \emph{trivial} S-ring over $G$.

Given $x, y \in FG$, where $x=\sum_{g \in G}a_gg,y=\sum_{g\in G}b_gg$, their \emph{Hadamard product} is defined by
$$x \circ y = \sum_{g\in G}a_gb_gg.$$
There is a well-known purely algebraic description of S-rings and PS-rings in terms of closure under the Hadamard product, avoiding reference to the combinatorial notion of basic sets (for proofs, see \cite[Proposition 3.1]{muzychuk} and \cite[Lemma 1.3]{muzychuk2}, or \cite[Theorem 1.7, Corollary 1.8]{kerby_masters}):

\begin{thm}\label{folklore}
Let $A$ be a subalgebra of $FG$. Then $A$ is a $PS$-ring if and
only if $A$ is closed under $\circ$ and $^{(-1)}$.
\end{thm}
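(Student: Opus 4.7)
The forward direction is a one-line check on basis elements: $\q T_i \circ \q T_j = \sum_{g \in T_i \cap T_j} g = \delta_{ij} \q T_i$ by disjointness of the $T_k$, so $A$ is $\circ$-closed by bilinearity, and closure under $^{(-1)}$ is literally condition (i).

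For the converse, suppose $A$ is closed under $\circ$ and $^{(-1)}$. My plan is to extract the basic sets $T_j$ from the natural equivalence relation: declare $g \approx h$ iff $a_g = a_h$ for every $x = \sum_k a_k k \in A$. Let $T_1, \ldots, T_n$ be the equivalence classes on which some element of $A$ does not vanish, and let $T_0$ (possibly empty) denote the unique class on which every element of $A$ does vanish. Since every $x \in A$ is, by construction of $\approx$, constant on each $T_j$ and zero on $T_0$, we immediately get $A \subseteq \Span_F\{\q T_1, \ldots, \q T_n\}$, hence $\dim A \leq n$.

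The heart of the argument is the reverse bound $\dim A \geq n$, which will force equality $A = \Span_F\{\q T_1, \ldots, \q T_n\}$ and so place each $\q T_j$ inside $A$. This is where $\circ$-closure enters. For chosen representatives $g_j \in T_j$, the evaluations $\chi_j \colon A \to F$, $x \mapsto a_{g_j}$, are multiplicative linear functionals on $(A, \circ)$ since $(x \circ y)_{g_j} = x_{g_j}\, y_{g_j}$; they are pairwise distinct by the very construction of $\approx$ and nonzero by the exclusion of $T_0$. Dedekind's lemma on the linear independence of characters (applied to the multiplicative semigroup $(A, \circ)$) then gives that $\chi_1, \ldots, \chi_n$ are linearly independent in $A^*$, so $\dim A \geq n$. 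I view this dimension count as the one step of real substance; an elementary alternative via Lagrange interpolation in $(A, \circ)$ to produce each indicator $\q T_j$ directly is possible but noticeably messier.

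To finish, I would verify condition (i). Using closure under $^{(-1)}$, the relation $\approx$ is preserved by $g \mapsto g^{-1}$: if $g \approx h$, then for any $x \in A$ we have $x^{(-1)} \in A$, and comparing the $g^{-1}$- and $h^{-1}$-coefficients of $x^{(-1)}$ gives $g^{-1} \approx h^{-1}$. Thus each $T_i^{(-1)}$ is again a full equivalence class; it cannot be $T_0$, since $\q{T_i^{(-1)}} = \q T_i^{(-1)} \in A$ is nonzero. Hence $T_i^{(-1)} = T_j$ for some $j \in \{1, \ldots, n\}$, completing the PS-ring verification.
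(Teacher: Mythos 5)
The paper does not actually reproduce a proof of this theorem; it defers to the cited references (Muzychuk and the author's thesis), so there is no in-text argument to compare against. Judged on its own, your proof is correct. The forward direction is routine, as you say. In the converse, the decomposition of $G$ into the level classes of the relation $\approx$, the inclusion $A\subseteq\Span_F\{\q T_1,\dots,\q T_n\}$, and the inversion argument for condition (i) are all sound. The one step that deserves scrutiny is the dimension count: the functionals $\chi_j$ are multiplicative for $\circ$ but may take the value $0$, so you are not quite in the classical setting of characters valued in $F^\times$. However, the standard Dedekind--Artin induction (choose $a$ with $\chi_1(a)\neq\chi_2(a)$, form $\sum_i c_i(\chi_i(a)-\chi_1(a))\chi_i=0$ to shorten a minimal relation) goes through verbatim for distinct nonzero multiplicative functionals on the commutative algebra $(A,\circ)$, and you have correctly noted that each $\chi_j$ is nonzero because $T_j\neq T_0$; so the bound $\dim A\geq n$ holds and forces $\q T_j\in A$. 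The route more commonly taken in the literature is to produce the idempotents $\q T_j$ directly, by Lagrange interpolation applied to the Hadamard powers of elements of $A$ (using polynomials with zero constant term so as to stay inside $A$ even when $\q G\notin A$), which is the ``messier but elementary'' alternative you mention; your character-independence argument trades that computation for a clean dimension count at the cost of invoking a (standard) lemma. Either way the proof is complete.
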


\begin{cor}\label{folklore_cor}
Let $A$ be a subalgebra of $FG$. Then $A$ is an $S$-ring if and only if $A$ is closed under $\circ$ and $^{(-1)}$ and contains $1$ and $\q G$.
\end{cor}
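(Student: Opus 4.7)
The plan is to reduce the corollary to Theorem \ref{folklore} by showing that the two extra hypotheses $1 \in A$ and $\q G \in A$ correspond precisely to the missing axiom (ii) in the definition of an $S$-ring. Closure under $\circ$ and $^{(-1)}$ already provides, via Theorem \ref{folklore}, a $PS$-ring structure with basic sets $T_1, \ldots, T_n$ that are pairwise disjoint nonempty subsets of $G$ satisfying (i), with $\q{T_1}, \ldots, \q{T_n}$ a basis for $A$.

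The forward direction is immediate: if $A$ is an $S$-ring with basic sets labeled so that $T_1 = \{1\}$, then $A$ is in particular a $PS$-ring (so Theorem \ref{folklore} gives closure under $\circ$ and $^{(-1)}$), and directly from (ii) we have $1 = \q{T_1} \in A$ and $\q G = \q{T_1} + \cdots + \q{T_n} \in A$.

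For the reverse direction I would expand the two given elements in the basis supplied by Theorem \ref{folklore}. Writing $\q G = \sum_i d_i \q{T_i}$, the coefficient of any $g \in T_i$ forces $d_i = 1$, whereas any $g \in G$ lying in no $T_i$ would give coefficient $1$ on the left but $0$ on the right; hence $G = T_1 \cup \cdots \cup T_n$, establishing the partition half of (ii). Next, writing $1 = \sum_i c_i \q{T_i}$ and relabeling so that $1 \in T_1$, the coefficient at $g = 1$ gives $c_1 = 1$, and then any additional $h \in T_1 \setminus \{1\}$ would force $0 = c_1 = 1$; so $T_1 = \{1\}$, completing (ii). Thus $A$ is an $S$-ring.

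The argument is essentially a routine basis expansion once Theorem \ref{folklore} is in hand, so no serious obstacle arises. The only point worth flagging is that the basic sets obtained from the $PS$-ring structure are uniquely determined (as the supports of the unique basis of $A$ consisting of $0$--$1$ vectors with pairwise disjoint supports), so it is meaningful to speak of ``the'' $T_i$ attached to $A$ when applying the coefficient comparison.
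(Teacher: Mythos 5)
Your proof is correct. The paper itself does not prove this corollary --- it defers to the literature (Muzychuk and the author's thesis) --- but your reduction to Theorem \ref{folklore} followed by coefficient comparison of $1$ and $\q G$ against the basis $\q{T_1},\dots,\q{T_n}$ is exactly the standard argument those sources give, and every step (forcing $d_i=1$, ruling out uncovered $g$, and collapsing $T_1$ to $\{1\}$) checks out. One minor remark: the uniqueness of the basic sets that you flag at the end is not actually needed, since the definition of an $S$-ring only requires the \emph{existence} of a Schur partition satisfying (i) and (ii), and you verify (ii) for the particular partition supplied by Theorem \ref{folklore}.
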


\begin{defn}
Let $S_1$ and $S_2$ be S-rings over groups $G_1$ and $G_2$
respectively. An algebra isomorphism $\phi : S_1 \to S_2$ is called
an \emph{isomorphism} of S-rings if $\phi$ maps basic quantities to
basic quantities, i.e., if for every basic set $C$ of $S_1$ there is
some basic set $D$ of $S_2$ such that $\phi(\q C)=\q D$.
\end{defn}

The following well-known result gives a purely algebraic characterization of S-ring isomorphisms (an elementary proof may be found in \cite[Theorem 3.4]{kerby_masters}):

\begin{thm}\label{isohad}
Let $S_1$ and $S_2$ be S-rings over groups $G_1$ and $G_2$
respectively. Then an $F$-algebra isomorphism $\phi: S_1 \to S_2$ is
an S-ring isomorphism if and only if $\phi$ respects the Hadamard
product, i.e., if and only if $\phi(x \circ y) = \phi(x) \circ
\phi(y)$ for all $x,y \in S_1$.
\end{thm}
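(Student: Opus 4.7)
The plan is to characterize the basic quantities of an S-ring $S$ algebraically, as the primitive idempotents of $S$ with respect to the Hadamard product $\circ$, and to conclude from this characterization that any $F$-algebra isomorphism $\phi$ respecting $\circ$ must permute the basic quantities.

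For the forward direction, suppose $\phi$ is an S-ring isomorphism. Because the basic sets of $S_1$ are pairwise disjoint and $\circ$ acts componentwise on group-algebra elements, we have $\q T_i \circ \q T_j = \delta_{ij}\q T_i$ on the basic quantities of $S_1$, and the same orthogonality holds in $S_2$. Since $\phi$ carries basic quantities of $S_1$ bijectively to basic quantities of $S_2$ (injectivity is immediate from $\phi$ being an $F$-algebra isomorphism, and surjectivity follows by comparing dimensions), the identity $\phi(x\circ y)=\phi(x)\circ\phi(y)$ holds whenever $x$ and $y$ each range over the basis $\{\q T_1,\dots,\q T_n\}$, and extends by $F$-bilinearity of both sides to all of $S_1\times S_1$.

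For the converse, suppose $\phi$ respects $\circ$. The key lemma is that the basic quantities of $S$ are exactly the primitive $\circ$-idempotents of $S$, meaning the nonzero $e\in S$ with $e\circ e=e$ that cannot be written as $e=e_1+e_2$ with $e_1,e_2\in S$ both nonzero $\circ$-idempotents satisfying $e_1\circ e_2=0$. To establish this, expand $x=\sum_i a_i\q T_i\in S$ and use pairwise disjointness of the $T_i$ to obtain $x\circ x=\sum_i a_i^2\q T_i$; hence $x\circ x=x$ forces $a_i\in\{0,1\}$ for every $i$, so $x=\q C$ for some union $C$ of basic sets. Any such $\q C$ decomposes as $\q{C_1}+\q{C_2}$ with the $C_j$ each a union of basic sets whenever $C$ contains at least two basic sets, so the primitive $\circ$-idempotents are precisely the individual $\q T_i$.

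Once this characterization is in hand, the conclusion is immediate: $\phi$ is an $F$-linear bijection respecting $\circ$, so it bijectively maps $\circ$-idempotents of $S_1$ to $\circ$-idempotents of $S_2$ and preserves decompositions of the form above, hence sends primitive $\circ$-idempotents to primitive $\circ$-idempotents, i.e.\ basic quantities to basic quantities. The main technical step is therefore the characterization of basic quantities as primitive $\circ$-idempotents, but as sketched this reduces to a short direct calculation using only the definition of an S-ring; no deeper input is needed.
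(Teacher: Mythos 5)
Your proof is correct. The paper does not prove this theorem itself but defers to \cite[Theorem 3.4]{kerby_masters}; the route you take---characterizing the basic quantities intrinsically as the primitive Hadamard idempotents, so that any linear bijection respecting $\circ$ (together with its inverse, which then also respects $\circ$) must permute them---is the standard argument that reference uses, and all the steps check out: the computation $x\circ x=\sum_i a_i^2\,\q{T_i}$ forces $a_i\in\{0,1\}$ over any field, and your explicit orthogonality requirement $e_1\circ e_2=0$ in the definition of primitivity keeps the argument valid even in characteristic $2$.
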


Elsewhere in the literature, for clarity, an isomorphism of S-rings is sometimes called an \emph{algebraic isomorphism}, in contrast to the notion of a \emph{combinatorial isomorphism} of S-rings, which, properly speaking, is an isomorphism of the association schemes corresponding to the S-rings.

If $\q C$ is any simple quantity contained in an S-ring $S$, then we say that $C$ is an \emph{$S$-set}. The following is also well-known (For a proof, see \cite[Theorem 3.7, Corollary 6.8]{kerby_masters}):

\begin{thm}\label{isosimp}
Let $\phi: S_1 \to S_2$ be an isomorphism of S-rings over groups $G_1$ and
$G_2$ respectively.
Let $C \subseteq G_1$ be an $S_1$-set. Then $\phi(\q C)=\q D$ for some $S_2$-set $D \subseteq G_2$, and we write $\phi(C)=D$. Moreover,
\begin{enumerate}
\item[(i)] $|C|=|D|$, and
\item[(ii)] If $C$ is a subgroup then $D$ is also a subgroup.
\end{enumerate}
\end{thm}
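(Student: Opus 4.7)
The first step is to establish $\phi(\q C)=\q D$ for some $D\subseteq G_2$. I would exploit that $\q C\circ\q C=\q C$ (since the coefficients of $\q C$ lie in $\{0,1\}$), and invoke Theorem~\ref{isohad} to get $\phi(\q C)\circ\phi(\q C)=\phi(\q C)$. Writing $\phi(\q C)=\sum_{g\in G_2}b_g g$, the relation $b_g^2=b_g$ forces $b_g\in\{0,1\}$, so $\phi(\q C)=\q D$ with $D=\{g:b_g=1\}$. Since $\phi(\q C)\in S_2$ by construction, the set $D$ is an $S_2$-set by definition.

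For part (i), my plan is to use the identity $\q C\cdot\q{G_1}=|C|\q{G_1}$, which holds in any group algebra because left multiplication by a group element permutes the basis of $FG_1$. First I need to establish $\phi(\q{G_1})=\q{G_2}$: since $\q{G_1}$ is the sum of all basic quantities of $S_1$ and $\phi$ restricts to a bijection between the basic quantities of $S_1$ and those of $S_2$, $\phi(\q{G_1})$ equals the sum of all basic quantities of $S_2$, which is $\q{G_2}$. Applying $\phi$ to the displayed identity then yields $\q D\cdot\q{G_2}=|C|\q{G_2}$; comparing with the direct computation $\q D\cdot\q{G_2}=|D|\q{G_2}$ and using $\q{G_2}\neq 0$ in $FG_2$ gives $|C|=|D|$.

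For part (ii), if $C$ is a subgroup of $G_1$, then a direct computation shows $\q C\cdot\q C=|C|\q C$. Applying $\phi$ and using part (i), we obtain $\q D\cdot\q D=|D|\q D$ in $FG_2$. Comparing coefficients at each $g\notin D$ forces $|\{(a,b)\in D\times D:ab=g\}|=0$, so $DD\subseteq D$; then $D$, being a nonempty finite subset of the finite group $G_2$ closed under multiplication, must be a subgroup.

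I expect the main subtlety to be the careful passage from equalities in $F$ to equalities of integers (both in (i) and in reading off the coefficient counts in (ii)); this is immediate in characteristic zero but requires additional care if $\Char F$ divides the relevant cardinalities. In the characteristic-zero setting that the rest of the paper's applications assume, the arguments above go through cleanly.
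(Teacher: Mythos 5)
Your proof is correct. Note that the paper does not actually prove Theorem \ref{isosimp} in the text --- it is stated as well known and the proof is deferred to the cited reference \cite{kerby_masters} --- so there is no in-paper argument to compare against; your argument is the standard one. The three steps are all sound: Hadamard idempotency of $\q C$ together with Theorem \ref{isohad} forces $\{0,1\}$ coefficients (using that $F$ is a field, so $b_g^2=b_g$ has only the two roots); the identity $\q C\,\q{G_1}=|C|\,\q{G_1}$ plus $\phi(\q{G_1})=\q{G_2}$ gives (i); and $\q C^{\,2}=|C|\,\q C$ plus nonemptiness and finiteness gives (ii). Your justification that $\phi(\q{G_1})=\q{G_2}$ (bijection on basic quantities, via equality of dimensions) is fine; a slightly slicker alternative is to observe that $\q{G_i}$ is the unique Hadamard identity of $S_i$, which $\phi$ must preserve by Theorem \ref{isohad}. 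Your closing caveat about positive characteristic is legitimate --- the passage from an equality like $(|C|-|D|)\q{G_2}=0$ in $FG_2$ to $|C|=|D|$ in $\mathbb Z$, and likewise the vanishing of the coefficient counts in (ii), genuinely requires $\Char F=0$ (or at least that $\Char F$ not divide the relevant integers) --- and this is consistent with the paper's own conventions, since it works over characteristic-zero coefficients wherever this theorem is applied.
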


An isomorphism from an S-ring onto itself is called an \emph{automorphism}. The set of automorphisms of an S-ring $S$ is denoted $\Aut(S)$. Every group automorphism in
$\Aut(G)$ naturally induces an S-ring isomorphism of $S$ (onto a possibly distinct S-ring);  such an isomorphism is called a \emph{Cayley isomorphism}, and the two S-rings are called \emph{Cayley-isomorphic}.

It is possible to consider S-rings where the coefficient field $F$ is replaced by an arbitrary ring $R$ (with unity). The collection of Schur partitions is affected only by the characteristic of $R$; so whether $R$ is $\mathbb{Z}$, $\mathbb{Q}$, $\mathbb{C}$, or any other ring of characteristic 0 makes no difference, as far as the collection of Schur partitions is concerned. Nevertheless, Theorem \ref{folklore} and Corollary \ref{folklore_cor} both become false over any commutative ring $R$ which is not a field; for this reason, we will always take our coefficient ring to be a field $F$. Over rings of nonzero characteristic, the collection of Schur partitions includes all those associated with zero characteristic in addition possibly to others. (For proof of these statements, see \cite[Example 1.10, Theorem 1.12]{kerby_masters}.) It is possible for the isomorphism class of the automorphism group $\Aut(S)$ to change depending on the characteristic of the coefficient field $F$ (see Example \ref{ex_nzchar} below). Throughout the literature on S-rings, most authors assume a coefficient ring of characteristic zero. We will also need this assumption in \S\ref{sec_cyc} when we show that the automorphism group of an S-ring over a cyclic group is abelian; indeed, this theorem becomes false over any ring of nonzero characteristic, as Example \ref{ex_nzchar} will show.

\section{Central and rational S-rings}\label{sec_rat}

\begin{defn}
A PS-ring $S$ over a group $G$ is \emph{central} if $S \subseteq
Z(FG)$, i.e., $S$ is contained in the center of the group algebra.
\end{defn}
\begin{remark}
This is equivalent to requiring that every basic set $T_i$ be a union of conjugacy classes of $G$.
\end{remark}

Of course, over an abelian group every S-ring is central. We are
primarily interested in a special class of central S-rings known
as rational S-rings:

\begin{defn}
A PS-ring $S$ over a group $G$ is \emph{rational} if for every $x\in
S$ and $\phi \in\Aut(G)$, we have $\phi(x)=x$.
\end{defn}
\begin{remark}
This is equivalent to requiring that every basic set $T_i$ be a union of automorphism
classes of $G$, where the \emph{automorphism classes} of $G$ are the orbits of $\Aut(G)$ acting on $G$ in the natural way. 
\end{remark}

The following provides a method for constructing many interesting central and rational S-rings:

\begin{thm}\label{latsring}
Let $G$ be any finite group, and let $\m{L}$ be any sublattice of
the lattice of normal subgroups of $G$. Then the vector space $F\q{\m
L}$ is a central PS-ring over $G$ with the
following properties, for all $H, K \in \m L$:
\begin{enumerate}
\item[(i)]$\q H^{(-1)} = \q H$
\item[(ii)] $\q H \circ \q K = \q{H\cap K}$
\item[(iii)] $\q H\ \q K = |H\cap K|\q{HK}$
\item[(iv)] $F\q{\m L}$ is an S-ring if and only if $1, G \in \m L$.
\item[(v)] $F\q{\m L}$ is rational if and only if $\m L$ consists entirely of characteristic subgroups.
\end{enumerate}
\end{thm}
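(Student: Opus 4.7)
Here is my plan: to verify the identities (i)--(iii) by direct computation, use Theorem \ref{folklore} to conclude that $F\q{\m L}$ is a PS-ring, and then read (iv)--(v) off from straightforward observations about the span and the action of $\Aut(G)$.

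For (i), $H^{(-1)}=H$ since $H$ is a subgroup, so $\q H^{(-1)}=\q H$. For (ii), the Hadamard product $\q H\circ\q K$ is by construction the formal sum of the group elements lying in both $H$ and $K$, which is $\q{H\cap K}$. For (iii), expanding $\q H\,\q K=\sum_{h\in H,\,k\in K}hk$ and invoking the standard fact that each element of $HK$ admits exactly $|H\cap K|$ factorizations $hk$ yields $|H\cap K|\q{HK}$. Since $\m L$ is closed under meet and join in the lattice of normal subgroups, both $H\cap K$ and $HK$ lie in $\m L$; hence $F\q{\m L}$ is closed under multiplication, under $\circ$, and under inversion, so Theorem \ref{folklore} delivers that it is a PS-ring. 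Centrality is immediate because each normal subgroup is a union of conjugacy classes, placing each $\q H$ in $Z(FG)$.

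For (iv), one direction is Corollary \ref{folklore_cor}: when $\{1\},G\in\m L$, the elements $1=\q{\{1\}}$ and $\q G$ both lie in $F\q{\m L}$, and a PS-ring containing $1$ and $\q G$ is an S-ring. For the converse I would use that $\m L$, being a finite sublattice closed under meet and join, contains a least element $M=\bigcap_{H\in\m L}H$ and a greatest element $N=\prod_{H\in\m L}H$, both belonging to $\m L$. Any spanning vector $\q H$ is supported inside $N$ and has coefficient $1$ on every element of $M$; comparing coefficients on elements of $M$ in an expansion $1=\sum_H a_H\q H$ then forces $|M|=1$, so $\{1\}=M\in\m L$, and comparing supports in an expansion of $\q G$ forces $N=G\in\m L$. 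For (v), I would use the relation $\phi(\q H)=\q{\phi(H)}$ for $\phi\in\Aut(G)$: every element of $F\q{\m L}$ is fixed by $\Aut(G)$ if and only if each spanning vector is fixed, if and only if each $H\in\m L$ is characteristic.

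The only step that requires any real thought is the converse direction of (iv), where one has to turn membership of $1$ and $\q G$ in the span $F\q{\m L}$ into membership of the extremal subgroups $\{1\}$ and $G$ in $\m L$ itself. The intersection/product argument above is the cleanest route I see, and it relies essentially on the hypothesis that $\m L$ is closed under both meet and join.
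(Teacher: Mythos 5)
Your proposal is correct and follows essentially the same route as the paper's proof: direct verification of (i)--(iii), closure via Theorem \ref{folklore}, normality for centrality, and for the converse of (iv) the same intersection/product argument comparing coefficients on the least element $\bigcap_{H\in\m L}H$ and supports inside the greatest element $\prod_{H\in\m L}H$. No gaps.
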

\begin{proof}
(i) is clear since $H$, as a subgroup, is closed under inverses.
(ii) is immediate from the definition of the Hadamard product. (iii)
is clear since, by elementary group theory, every element of $HK$
can be written in $|H\cap K|$ ways as a product of an element in $H$
with an element in $K$. Now, since the subgroups $H$ and $K$ are
normal, $HK$ is also a subgroup of $G$; since $\m L$ is a lattice,
we have $H\cap K, HK \in\m L$. Thus, (i)--(iii) show that $F\q{\m L}$
is closed under $^{(-1)}$, multiplication, and the Hadamard product,
so by Theorem \ref{folklore}, $F\q{\m L}$ is a PS-ring. Since each
subgroup $H$ is normal, we have $g\q Hg^{-1}=\q{gHg^{-1}}=\q H$ for
all $g\in G$ and $H \in\m L$, so that $\q H \in Z(FG)$. It follows
that $F\q{\m L}$ is a central PS-ring.

Now, if $1, G \in \m L$ then $1, \q G \in F\q{\m L}$, so by Corollary
\ref{folklore_cor}, $F\q{\m L}$ is an S-ring. Suppose conversely
that $F\q{\m L}$ is an S-ring. Let $L=\bigcap_{H\in\m L} H$, so that
$L \in \m L$. If $|L|>1$, then since $L \subseteq H$ for every spanning
element $\q H$ of $F\q{\m L}$, it follows that every element of $F\q{\m L}$ with a non-zero coefficient of $1\in G$ also has other non-zero
coefficients (namely, the other elements of $L$ have non-zero
coefficients), so that $1 \notin F\q{\m L}$, contrary to the assumption that $F\q{\m L}$ is an S-ring. So we must have $L=1$, hence $1 \in \m L$, as desired. Now define
$M=\prod_{H\in\m L} H$. If $M\neq G$, then since $H \subseteq M$ for
every spanning element $\q H$ of $F\q{\m L}$, it follows that the nonzero
coefficients of every element $x \in F\q{\m L}$ are contained in $M$,
so that $\q G \notin F\q{\m L}$, again contradicting that $F\q{\m L}$ is an S-ring. So $G \in \m L$ as desired.

Finally, $F\q{\m L}$ is rational if and only if $\phi(\q H)=\q H$ for
every $\phi\in\Aut(G)$ and each spanning element $\q H$ of $F\q{\m L}$;
this holds if and only if $\phi(H)=H$ for each $H\in\m L$, i.e. if
and only if each $H\in\m L$ is characteristic.
\end{proof}

If $G$ is a cyclic group, then every subgroup of $G$ is
characteristic; consequently, the construction of Theorem
\ref{latsring} produces only rational S-rings. In this context, we
may state the main theorem of \cite{muzychuk} as follows:

\begin{thm}\emph{(Muzychuk)}\label{ratcyc}Every rational S-ring over a finite cyclic group may be constructed as in Theorem \ref{latsring}.
\end{thm}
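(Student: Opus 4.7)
The plan is to define $\mathcal{L} := \{H \leq Z_n : \overline{H} \in S\}$, to verify that $\mathcal{L}$ is a sublattice of the subgroup lattice of $Z_n$ containing $\{1\}$ and $Z_n$, and then to show $S = F\overline{\mathcal{L}}$; Theorem \ref{latsring} then furnishes the construction. That $\mathcal{L}$ is a sublattice falls out of parts (ii) and (iii) of Theorem \ref{latsring}: closure of $S$ under $\circ$ gives $\overline{H \cap K} = \overline{H} \circ \overline{K} \in S$, while closure under $\cdot$ together with invertibility of $|H\cap K|$ in characteristic zero gives $\overline{HK} = |H\cap K|^{-1}\,\overline{H}\cdot\overline{K} \in S$; the subgroups $\{1\}$ and $Z_n$ are in $\mathcal{L}$ by the S-ring axioms. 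The inclusion $F\overline{\mathcal{L}} \subseteq S$ is immediate.

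The content of the theorem is the reverse inclusion, for which the key observation is:

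\emph{Claim A.} For every basic set $T$ of $S$, $\langle T\rangle \in \mathcal{L}$.

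Indeed, since $Z_n$ is finite $\langle T\rangle = \bigcup_{k \geq 1} T^k$, and each $T^k$ is the support of $\overline{T}^k \in S$. The indicator of the support of any $x \in S$ is itself in $S$---it is the subsum of the basic-quantity expansion of $x$ picking out those basic quantities with nonzero coefficients---so applying this to $\sum_{k=1}^{n} \overline{T}^k$ yields $\overline{\langle T\rangle} \in S$. Combining Claim A with the elementary fact that every $K \in \mathcal{L}$ is a union of basic sets of $S$ (since $\overline{K}\in S$ has $0$--$1$ coefficients in its basic-quantity expansion), each basic set $T$ must lie in $B_{\langle T\rangle}$, where $B_K := K \setminus \bigcup_{K' \in \mathcal{L},\, K' < K} K'$ is the basic set of $F\overline{\mathcal{L}}$ indexed by $K$: if $g \in T \cap K'$ for some $K' < \langle T\rangle$ in $\mathcal{L}$, then $T \subseteq K'$, forcing $\langle T\rangle \leq K'$, a contradiction.

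To show that each $B_K$ is exactly one basic set of $S$, I would induct on $n$. For every proper $K \in \mathcal{L}$, the subalgebra $S \cap FK$ inherits closure under $\cdot, \circ, {}^{(-1)}$ from $S$ and contains $1$ and $\overline{K}$, so it is an S-ring over the cyclic group $K$; its rationality follows from the surjectivity of the restriction map $\Aut(Z_n) \twoheadrightarrow \Aut(K)$ for cyclic groups, which is a routine CRT lifting. By the inductive hypothesis, $S \cap FK = F\overline{\mathcal{L}_K}$ with $\mathcal{L}_K := \{H \in \mathcal{L} : H \leq K\}$, so its basic sets are precisely $\{B_{K'} : K' \in \mathcal{L},\, K' \leq K\}$. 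Specializing to $K = K'$ for each proper $K' \in \mathcal{L}$ gives that $B_{K'}$ is a single basic set of $S$.

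The remaining task, and the main obstacle, is to handle $B_{Z_n}$: restriction to a proper subgroup cannot help here. Suppose for contradiction $B_{Z_n}$ splits as $T_1 \sqcup T_2 \sqcup \cdots$ with at least two basic sets of $S$. Each $T_i$ satisfies $\langle T_i\rangle = Z_n$ by Claim A (since $T_i$ is disjoint from every proper $\mathcal{L}$-subgroup), the single automorphism class $\mathrm{Gen}(Z_n)$ of generators of $Z_n$ lies entirely in exactly one of them (say $T_1$), and each $T_i$ for $i \geq 2$ consists of elements of order strictly less than $n$ whose orders collectively have least common multiple $n$. I would derive a contradiction from the multiplicative closure of $S$ by expanding $\overline{T_i}^2$ or $\overline{T_i}\cdot\overline{T_j}$ and matching coefficients against the inductively-known basic quantities inside each proper $K \in \mathcal{L}$; this should force some $\overline{H}$ with $H \not\in \mathcal{L}$ to lie in $S$, contradicting the definition of $\mathcal{L}$. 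A potentially cleaner alternative is to invoke Fourier duality: rationality is preserved under the duality $S \mapsto \hat S$ for cyclic groups, and any hypothetical splitting of $B_{Z_n}$ propagates to a splitting on $\hat Z_n \cong Z_n$ incompatible with the dual sublattice $\mathcal{L}^\perp$.
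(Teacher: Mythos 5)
The paper does not actually prove this statement---it is quoted as the main theorem of \cite{muzychuk}---so your attempt has to stand on its own. The framework you set up is correct and cleanly executed: taking $\mathcal L=\{H\le Z_n:\overline H\in S\}$, checking via Theorem \ref{latsring}(ii),(iii) that it is a sublattice containing $1$ and $Z_n$ (using characteristic $0$ to divide by $|H\cap K|$), Claim A, the containment $T\subseteq B_{\langle T\rangle}$, and the induction showing that $B_{K'}$ is a single basic set for every \emph{proper} $K'\in\mathcal L$ are all sound. (The rationality of $S\cap FK$ via surjectivity of $\Aut(Z_n)\to\Aut(K)$ is the right observation; you should also say a word about why the basic sets of $S\cap FK$ are exactly the basic sets of $S$ contained in $K$, but that is routine since $K$ is a union of basic sets of $S$.) Your reduction is also genuinely tight: since any sublattice $\mathcal L'$ with $S=F\overline{\mathcal L'}$ must equal your $\mathcal L$ by a dimension count, the theorem is precisely equivalent to the statement that each $B_K$ is a single basic set.

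The problem is that essentially all of the content of Muzychuk's theorem is concentrated in the one step you leave open: that $B_{Z_n}=Z_n\setminus\bigcup_{K\in\mathcal L,\,K<Z_n}K$ does not split. Note that everything before this point (apart from the inductive appeal) holds verbatim for arbitrary, not necessarily rational, S-rings over $Z_n$---and for those the conclusion is false: the cyclotomic S-ring over $Z_5$ with basic sets $\{1\},\{t,t^4\},\{t^2,t^3\}$ has $\mathcal L=\{1,Z_5\}$ but $B_{Z_5}$ splits into two basic sets. So rationality must enter in an essential way exactly at the step you defer, and ``expand $\overline{T_i}\,\overline{T_j}$ and match coefficients'' is a heuristic, not an argument: it does work in toy cases (for $n=pq$, squaring the putative basic quantity $\overline{\mathrm{Gen}(p)}+\overline{\mathrm{Gen}(q)}$ yields coefficients $p-2\ne q-2$ on the two classes), but carrying it out for a general pattern of divisors against a general sublattice is the structural analysis that constitutes Muzychuk's paper. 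The Fourier-duality alternative is likewise only named, not executed. As it stands, the proposal correctly reduces the theorem to its hard kernel but does not prove it.
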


There are rational S-rings over abelian $p$-groups
which cannot be constructed as in Theorem \ref{latsring} (see
Example \ref{nonlatsring}). However, there are other types of groups
for which Theorem \ref{latsring} produces the complete set of
rational S-rings. We mention an example, whose proof can be found in \cite[Theorem 2.6]{kerby_masters}:

\begin{thm}\label{ratdih}Every rational S-ring over a finite dihedral group may be constructed as in Theorem \ref{latsring}.
\end{thm}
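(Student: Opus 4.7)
Let $S$ be a rational S-ring over $D_n=\langle r,s\mid r^n=s^2=1,\ srs=r^{-1}\rangle$. The cases $n\le 2$ are handled directly (for $n=1$ the group is cyclic; for $n=2$ there are only two automorphism classes, so the trivial S-ring is the unique rational one), so assume $n\ge 3$. The overall plan is to identify the ``rotation part'' of $S$ as a rational S-ring over a subgroup $M\le\langle r\rangle$, apply Theorem \ref{ratcyc} there, and then lift the resulting sublattice to $D_n$ by adjoining $D_n$ on top. Since the reflection set $R=\{sr^i:0\le i<n\}$ is a single $\Aut(D_n)$-orbit, there is a unique basic set $B$ of $S$ containing reflections, and by rationality $B=R\cup T$ for some (possibly empty) union $T$ of rotation orbits. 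Let $M=D_n\setminus B=\langle r\rangle\setminus T$; then $\q M=1+\sum_i\q{T_{\mathrm{rot},i}}\in S$, the sum running over the rotation basic sets of $S$.

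The main obstacle is to show that $M$ is a subgroup of $\langle r\rangle$ (when $T=\emptyset$ this is automatic). Using $\q R\,\q R=n\q{\langle r\rangle}$ and the identity $a\q R=\q R$ for every $a\in\langle r\rangle$, one computes
\[
\q B^{\,2}-2|T|\,\q B \;=\; n\q{\langle r\rangle}-2|T|\,\q T+\q T^{\,2}\;\in\;S\cap F\langle r\rangle.
\]
The basic quantities of $S$ supported in $\langle r\rangle$ are precisely $1$ and the $\q{T_{\mathrm{rot},i}}$, so every element of $S\cap F\langle r\rangle$ is supported on $M$; the displayed element must therefore vanish on $T$. Reading off the coefficient at $g\in T$ yields
\[
c_g\;:=\;\bigl|\{(a,b)\in T^2:ab=g\}\bigr|\;=\;2|T|-n.
\]
Since $T=T^{-1}$ (rotation orbits are inverse-closed), $c_g=|T\cap gT|$, and the elementary bound $|T\cap gT|\ge 2|T|-n$ is saturated, forcing $T\cup gT=\langle r\rangle$, i.e.\ $M\subseteq gT$ for every $g\in T$. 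Rearranging gives $g^{-1}M\subseteq T$ for every $g\in T$, hence $TM=T^{-1}M\subseteq T$; combined with $1\in M$ this yields $TM=T$. Thus $M$ lies in the setwise right-stabilizer $H\le\langle r\rangle$ of $T$; but $H$ also stabilizes $\langle r\rangle\setminus T=M$, and $1\in M$ then forces $h=1\cdot h\in M$ for every $h\in H$. Hence $H=M$ is a subgroup.

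With $M$ a subgroup, $S\cap F\langle r\rangle$ contains $1$ and $\q M$, is closed under $\circ$ and $^{(-1)}$, and is therefore an S-ring over $M$ by Corollary \ref{folklore_cor}. It is rational: the natural surjection $(\Z/n)^{*}\twoheadrightarrow(\Z/|M|)^{*}$ shows that any $\phi\in\Aut(M)$ extends to $\Aut(\langle r\rangle)$, and thence to $\Aut(D_n)$ via $s\mapsto s$, under which $S$ (and hence $S\cap F\langle r\rangle$) is invariant. Theorem \ref{ratcyc} then provides a sublattice $\m L_0$ of subgroups of $M$ with $S\cap F\langle r\rangle=F\q{\m L_0}$. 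Since every subgroup of the cyclic group $\langle r\rangle$ is characteristic in $\langle r\rangle$, and $\langle r\rangle$ is characteristic in $D_n$, transitivity makes each $H\in\m L_0$ characteristic in $D_n$. Setting $\m L=\m L_0\cup\{D_n\}$ gives a sublattice of characteristic subgroups of $D_n$ with $\{1\},D_n\in\m L$; then $F\q{\m L}\subseteq S$ by construction, and the dimension count $|\m L|=|\m L_0|+1=\dim(S\cap F\langle r\rangle)+1=\dim S$ forces $S=F\q{\m L}$, completing the proof.
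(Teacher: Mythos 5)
The paper does not actually prove Theorem \ref{ratdih}; it only cites \cite[Theorem 2.6]{kerby_masters} for the argument, so there is no in-paper proof to compare yours against. Judged on its own, your proof is correct and self-contained. The reductions at the ends (handling $n\le 2$; identifying the reflections as a single $\Aut(D_n)$-orbit for $n\ge 3$, so that the unique basic set meeting them is $B=R\cup T$; passing to $S\cap F\langle r\rangle$ as a rational S-ring over $M$ and invoking Theorem \ref{ratcyc}; the dimension count $|\m L_0|+1=\dim S$) all check out, as does the one genuinely nontrivial step: since $\q R^2=n\q{\langle r\rangle}$ and $\q R\,\q T=\q T\,\q R=|T|\q R$, the element $\q B^2-2|T|\q B=n\q{\langle r\rangle}+\q T^2-2|T|\q T$ lies in $S\cap F\langle r\rangle$, hence is supported on $M$, and the resulting identity $|T\cap gT|=2|T|-n$ for $g\in T$ saturates the inclusion--exclusion bound, giving $T\cup gT=\langle r\rangle$; your stabilizer argument then cleanly forces $M$ to be a subgroup. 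Two minor points you should make explicit: (a) $\langle r\rangle$ is characteristic in $D_n$ for $n\ge 3$ (every element of order $n>2$ is a rotation), which you use both to see that the rotation automorphism classes are exactly the $(\Z/n)^{*}$-orbits and to transfer characteristicity from $\langle r\rangle$ to $D_n$; and (b) the rationality of $S\cap F\langle r\rangle$ over $M$ can be seen even more directly than via the surjection $(\Z/n)^{*}\twoheadrightarrow(\Z/|M|)^{*}$, since the $\Aut(M)$-orbits on $M$ (elements of a fixed order $d\mid |M|$) coincide with the $\Aut(D_n)$-orbits on $\langle r\rangle$ that lie in $M$. Neither point is a gap.
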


\section{Characteristic subgroups of abelian $p$-groups}\label{sec_auto}

In this section, we review the description of the automorphism classes and characteristic subgroups of finite abelian $p$-groups. This topic was considered in 1905 and 1920 by G. A. Miller \cite{miller,miller2} and again, independently, in 1934 by Baer, who considered the more general case of periodic abelian groups \cite{baer}, and finally in 1935 by Birkhoff \cite{birkhoff_subabel}. The historical nature of these early works is such that they are not easy reading; the present author confesses that in many places the statements made in them, and particularly the proofs, do not always seem clear. A more recent treatment may be found in \cite{kerby_turner} (or \cite{kerby_masters}), which includes proofs of all the results below as well as a detailed description of the exceptional case $p=2$.

Throughout this section, $G$ will denote a finite abelian $p$-group: $$G=Z_{p^{\lambda_1}}\times Z_{p^{\lambda_2}}\times \cdots \times Z_{p^{\lambda_n}},$$ where $\lambda_1 \leq \lambda_2 \leq \cdots \leq \lambda_n$. We define $\h \lambda(G)$ to be the tuple $(\lambda_1,\dots,\lambda_n)$. For convenience, we will often use the convention $\lambda_0=0$. As we will be working extensively with such tuples of integers, it will be useful to introduce some notation for dealing with them:

\begin{defn}
Given tuples $\h a=(a_1,\dots,a_n)$ and $\h b=(b_1,\dots,b_n)$ with integer entries, define
\begin{align*}
\h a \leq \h b &\text{ if $a_i \leq b_i$ for all $i\in\{1,\dots,n\}$}\\
\h a \wedge \h b &= (\min\{a_1,b_1\}, \min\{a_2,b_2\}, \dots, \min\{a_n,b_n\})\\
\h a \vee \h b &= (\max\{a_1,b_1\}, \max\{a_2,b_2\}, \dots, \max\{a_n,b_n\})
\end{align*}
\end{defn}

Define $\Lambda(G)$ to be the set of tuples $$\Lambda(G)=\{\h a : \h 0 \leq \h a \leq \h \lambda(G)\}.$$ It is evident that $\Lambda(G)$, under the partial order $\leq$, forms a finite lattice in which $\wedge$ and $\vee$ are the greatest lower bound and least upper bound  respectively.

Given a tuple $\h a \in \Lambda(G)$, we define $T(\h a)$ to be the set of elements $g\in G$ for which the $i$th component of $g$ has order $p^{a_i}$:
$$T(\h a)=\{(g_1,g_2,\dots,g_n) \in G : |g_i|=p^{a_i}\text{ for all $i=1,\dots,n$}\}.$$
Note that the sets $T(\h a)$ partition the group $G$. If $g \in T(\h a)$, we say that the  \emph{type} of $g$ is $T(\h a)$.

The following is straightforward to prove:
\begin{lemma}\label{lemtype}
If two elements have the same type, then they are in the same automorphism class.
\end{lemma}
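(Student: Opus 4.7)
The plan is to construct an explicit automorphism of $G$ mapping one element to the other, by working coordinatewise in the direct-product decomposition $G=Z_{p^{\lambda_1}}\times\cdots\times Z_{p^{\lambda_n}}$. Suppose $g=(g_1,\dots,g_n)$ and $h=(h_1,\dots,h_n)$ both lie in $T(\h a)$, so that $|g_i|=|h_i|=p^{a_i}$ for every $i$. If for each $i$ I can produce $\phi_i \in \Aut(Z_{p^{\lambda_i}})$ with $\phi_i(g_i)=h_i$, then the product $\phi_1\times\cdots\times\phi_n$ is an automorphism of $G$ sending $g$ to $h$, which is exactly what is required.

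The core step is therefore the cyclic case: for each $k\geq j\geq 0$, the group $\Aut(Z_{p^k})\cong (\Z/p^k\Z)^\times$ acts transitively on the set of elements of order $p^j$ in $Z_{p^k}$. Writing $Z_{p^k}$ additively, an element has order $p^j$ iff it is of the form $cp^{k-j}$ with $\gcd(c,p)=1$, and $u\in(\Z/p^k\Z)^\times$ sends $c_1p^{k-j}$ to $c_2p^{k-j}$ precisely when $uc_1\equiv c_2\pmod{p^j}$. Since $c_1$ is a unit mod $p^j$, I need to exhibit a unit $u$ mod $p^k$ whose image in $(\Z/p^j\Z)^\times$ equals $c_2c_1^{-1}$; this follows because the reduction map $(\Z/p^k\Z)^\times\twoheadrightarrow(\Z/p^j\Z)^\times$ is surjective (any integer coprime to $p$ is coprime to every power of $p$).

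Combining these two ingredients finishes the proof. The main obstacle, such as it is, is essentially just to confirm the surjectivity of the reduction on unit groups, but this is immediate, so the argument is completely routine. I note that the statement uses only the direct-product structure together with the well-known structure of $\Aut(Z_{p^k})$, and in fact holds uniformly for all primes $p$, including $p=2$ where some of the later results of \S\ref{sec_auto} require more care.
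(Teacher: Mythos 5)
Your proof is correct: the coordinatewise reduction to the transitivity of $\Aut(Z_{p^k})\cong(\Z/p^k\Z)^\times$ on elements of a fixed order, via surjectivity of the reduction map on unit groups, is exactly the standard argument, and it is the one the paper has in mind when it calls the lemma ``straightforward'' and defers the details to the cited references. Your remark that the argument is uniform in $p$, including $p=2$, is also accurate, since the exceptional behaviour at $p=2$ enters only in later results such as Theorem \ref{charreg}.
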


\begin{defn}
Given a type $T(\h a)$, the automorphism class of $G$ containing $T(\h a)$ is denoted $O(\h a)$.
\end{defn}

\begin{defn}\label{defcan}
A tuple $\h a=(a_1,\dots,a_n)$ and its corresponding type $T(\h a)$ are called \emph{canonical} if for all $i\in\{1,\dots,n-1\}$,
\begin{enumerate}
\item[(i)] $a_i \leq a_{i+1}$ and
\item[(ii)] $a_{i+1}-a_i \leq \lambda_{i+1}-\lambda_i$.
\end{enumerate}
\end{defn}

The definition of ``canonical" is justified by the following theorem.

\begin{thm}\label{unican}
Every automorphism class contains a unique canonical type. 
\end{thm}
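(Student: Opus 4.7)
The plan is to establish existence and uniqueness separately.

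For existence, the idea is to show that any violation of canonicity at adjacent indices $(i,i+1)$ can be repaired by an automorphism of the two-factor subgroup $Z_{p^{\lambda_i}}\times Z_{p^{\lambda_{i+1}}}$, extended by the identity on the remaining direct factors. If $a_i>a_{i+1}$, one seeks an automorphism that swaps or redistributes the orders, built from shears of the form $(u,v)\mapsto(u+\pi(v),v)$ and $(u,v)\mapsto(u,v+\iota(u))$, where $\pi:Z_{p^{\lambda_{i+1}}}\to Z_{p^{\lambda_i}}$ is reduction and $\iota:Z_{p^{\lambda_i}}\to Z_{p^{\lambda_{i+1}}}$ is multiplication by $p^{\lambda_{i+1}-\lambda_i}$. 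If instead $\lambda_i-a_i>\lambda_{i+1}-a_{i+1}$, a similar shear raises the order of $g_i$ by adding a suitable multiple of $g_{i+1}$. Iterating these local moves, and terminating via a lexicographic measure on the tuple, drives any $\h a$ to a canonical tuple in the same automorphism class.

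For uniqueness, I would exhibit $\Aut(G)$-invariants of $g\in T(\h a)$ that uniquely determine $\h a$ when $\h a$ is canonical. The natural choice is the $p$-height sequence $h_k(g)=\max\{j:p^kg\in p^jG\}$, for which a direct computation in the product decomposition gives $h_k(g)=k+\min\{\lambda_i-a_i:a_i>k\}$ (with the convention that the minimum over an empty set is $\infty$). When $\h a$ is canonical, both $a_i$ and $\lambda_i-a_i$ are nondecreasing, so this minimum is attained at $j(k)=\min\{i:a_i>k\}$; the resulting sequence $(h_k-k)_k$ has a piecewise-constant plateau structure from which $\h a$ can be recovered uniquely, given $\h\lambda$. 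Each plateau identifies one ``visible'' index $j$ together with its values of $a_j$ and $\lambda_j$, and the canonical monotonicity conditions force the entries at the intervening ``invisible'' indices.

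The main obstacle is the explicit verification of the local automorphisms in the existence step when $\lambda_i<\lambda_{i+1}$: the two factors are then non-isomorphic and cannot be swapped outright, so one must carefully compose shears and projections in order to produce a bijective endomorphism realizing the desired change of type. A secondary subtlety for uniqueness arises when $\h\lambda$ has repeated entries, so that matching a plateau to a particular index of $G$ is not a priori unique; but the canonical condition that $\lambda_i-a_i$ be nondecreasing picks out the unique admissible assignment.
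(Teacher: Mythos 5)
The paper does not actually prove Theorem \ref{unican}; it defers to \cite{kerby_turner} and \cite{kerby_masters}, so there is no in-text argument to compare yours against. Judged on its own, your two-part strategy (local automorphisms for existence, the height sequence as a complete invariant for uniqueness) is the standard and correct one, but both halves need repairs of the kind described below.

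For existence, the obstacle you flag is not actually an obstacle: no swap is ever needed, and the two shears you name already suffice. If $a_i>a_{i+1}$, apply $(u,v)\mapsto(u,v+\iota(u))$; since $\iota(u)$ has order $p^{a_i}>p^{a_{i+1}}=|v|$, the new $(i{+}1)$st entry of the type becomes $a_i$. If $a_{i+1}-a_i>\lambda_{i+1}-\lambda_i$, apply $(u,v)\mapsto(u+\pi(v),v)$; here $\pi(v)$ has order $p^{a_{i+1}-(\lambda_{i+1}-\lambda_i)}>p^{a_i}$, so the new $i$th entry becomes $a_{i+1}-(\lambda_{i+1}-\lambda_i)$. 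Both maps are unipotent endomorphisms, hence automorphisms of $G$ after extending by the identity, and each move strictly increases $\sum_j a_j$, which is bounded by $\sum_j\lambda_j$; so the process terminates, necessarily at a canonical tuple in the same automorphism class (using Lemma \ref{lemtype}). The sum is a cleaner terminating measure than a lexicographic one, since a single move can create new violations at neighboring positions.

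For uniqueness, your invariant is the right one but the plateau-reading step has a genuine gap: if two consecutive ``visible'' indices $j<j'$ happen to satisfy $\lambda_j-a_j=\lambda_{j'}-a_{j'}$, their plateaus merge in the sequence $(h_k-k)_k$ and cannot be separated by inspection, so ``each plateau identifies one visible index'' fails as stated. The clean repair is to bypass plateaus entirely and prove the closed formula $a_i=|\{k\geq 0 : h_k(g)<\lambda_i\}|$ for $g\in T(\h a)$ with $\h a$ canonical. Indeed, from your expression $h_k(g)=k+\min\{\lambda_j-a_j : a_j>k\}$: if $a_i>k$ then the minimum is at most $\lambda_i-a_i$, giving $h_k\leq k+\lambda_i-a_i<\lambda_i$; if $a_i\leq k$ then every $j$ with $a_j>k$ satisfies $a_j>a_i$, hence $j>i$ by condition (i) of Definition \ref{defcan}, hence $\lambda_j-a_j\geq\lambda_i-a_i$ by condition (ii), giving $h_k\geq k+\lambda_i-a_i\geq\lambda_i$. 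Thus $a_i>k$ if and only if $h_k(g)<\lambda_i$, and the formula follows. Since $(h_k)_k$ is an $\Aut(G)$-invariant, two canonical types in the same class must coincide; this also disposes of your worry about repeated entries of $\h\lambda(G)$, since the formula depends only on the value $\lambda_i$.
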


\begin{example}
Let $G=Z_2\times Z_8=Z_2\times Z_{2^3}=\langle s\rangle\times\langle
t\rangle$. Then there are 6 automorphism classes of $G$, namely:
\begin{align*}
O(0,0)&=T(0,0)=\{1\},\\
O(0,1)&=T(0,1)=\{t^4\},\\
O(0,2)&=T(0,2)=\{t^2,t^6\},\\
O(1,1)&=T(1,1)\cup T(1,0)=\{s,st^4\},\\
O(1,2)&=T(1,2)=\{st^2,st^6\},\\
O(1,3)&=T(1,3)\cup T(0,3)=\{t,st,t^3,st^3,t^5,st^5,t^7,st^7\}.\\
\end{align*}
\end{example}

Having established a sufficiently detailed description of the automorphism classes for our purposes, we now turn to the characteristic subgroups. We let $\Char(G)$ denote the lattice of characteristic subgroups of $G$.

\begin{defn}
Given a tuple $\h a \in \Lambda(G)$, we define the subgroup $R(\h a)=\underset{\h b\leq \h a}\cup T(\h b)$ and call $R(\h a)$ the \emph{regular subgroup below} $\h a$.
\end{defn}
\begin{remark}
We use the term ``regular", following Baer (\cite{baer}). But this concept of regular should not be confused with the notion of a regular permutation group, nor of a regular $p$-group.
\end{remark}

\begin{thm}\label{canchar}
$R(\h a)$ is a characteristic subgroup if and only if $\h a$ is canonical.
\end{thm}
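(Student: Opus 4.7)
The plan is to treat the two implications separately: for the ``if'' direction, realize $R(\h a)$ explicitly as a sum of manifestly characteristic subgroups; for the ``only if'' direction, proceed by contrapositive, constructing elementary ``shear'' automorphisms that push some element of $R(\h a)$ outside.

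For the ``if'' direction, write $\mu_i = \lambda_i - a_i$, so that $R(\h a) = \bigoplus_i p^{\mu_i} Z_{p^{\lambda_i}}$. The canonical conditions on $\h a$ translate equivalently to $\mu_i \le \mu_{i+1}$ and $\mu_{i+1}-\mu_i \le \lambda_{i+1}-\lambda_i$. The central claim is the identity
$$R(\h a) \;=\; \sum_{i=1}^n \{\, g \in p^{\mu_i}G : p^{a_i} g = 0 \,\},$$
in which each summand is characteristic because the $p$-th power map and its kernels are fully invariant in $G$. Computing the $j$-th direct factor of the $i$-th summand gives $p^{e_{ij}}Z_{p^{\lambda_j}}$ for an explicit $e_{ij}$; the diagonal $i=j$ forces $e_{jj}=\mu_j$, and a short case check using the canonical conditions shows $e_{ij}\ge \mu_j$ for the remaining pairs. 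Hence the $j$-th factor of the sum is $p^{\mu_j}Z_{p^{\lambda_j}}$, matching $R(\h a)$.

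For the ``only if'' direction, assume $\h a$ is not canonical, and take $g$ to be any element of type $T(\h a)$, so $g\in R(\h a)$. If condition (i) fails at index $i$ (so $a_i > a_{i+1}$), apply the automorphism $\phi$ defined on generators by $x_i \mapsto x_i + p^{\lambda_{i+1}-\lambda_i}x_{i+1}$ and fixing the rest; this is well-defined because $\lambda_i \le \lambda_{i+1}$. A one-line computation shows that the $(i+1)$-st coordinate of $\phi(g)$ picks up a term of order $p^{a_i}$ which dominates $g_{i+1}$ and so raises that coordinate's order above $p^{a_{i+1}}$. If condition (ii) fails at index $i$ (so $a_{i+1}-a_i > \lambda_{i+1}-\lambda_i$), use $\phi$ with $x_{i+1}\mapsto x_i + x_{i+1}$ instead; then the $i$-th coordinate of $\phi(g)$ acquires a term of order $p^{a_{i+1}+\lambda_i-\lambda_{i+1}}$, which by hypothesis exceeds $p^{a_i}$. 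Either way, $\phi(g)\notin R(\h a)$.

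I expect the main obstacle to be the case analysis establishing $e_{ij}\ge \mu_j$ in the forward direction: one must separate $i\le j$ from $i>j$ (with a sub-case when $\mu_i$ exceeds $\lambda_j$), using $\lambda_j - a_i \ge \lambda_j - a_j = \mu_j$ in the former range (from monotonicity of $\h a$) and $\mu_i \ge \mu_j$ in the latter (from monotonicity of $\mu$). The shear-automorphism arguments in the reverse direction are, by contrast, routine single-coordinate calculations in a cyclic $p$-group once the correct shear has been chosen for each failure mode.
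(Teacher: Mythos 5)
Your proof is correct. Note that the paper itself does not prove Theorem \ref{canchar}; it defers all proofs in \S\ref{sec_auto} to the cited references (\cite{kerby_turner}, \cite{kerby_masters}), so there is no in-paper argument to compare against. On its own merits your argument is complete: the identity $R(\h a)=\sum_i\bigl(p^{\mu_i}G\cap\{g:p^{a_i}g=0\}\bigr)$ does hold under the canonical conditions (I checked that $e_{ij}=\max\{\min\{\mu_i,\lambda_j\},\,\lambda_j-a_i,\,0\}$ satisfies $e_{jj}=\mu_j$ and $e_{ij}\geq\mu_j$ exactly as you describe, using condition (i) for $i\leq j$ and condition (ii) for $i>j$), each summand is fully invariant and hence characteristic, and a sum of characteristic subgroups is characteristic. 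The two shear automorphisms in the contrapositive direction are well-defined and do push an element of $T(\h a)\subseteq R(\h a)$ out of $R(\h a)$ in each failure mode; the only hypotheses used are $a_i\geq 1$ in the first case and $a_{i+1}>\lambda_{i+1}-\lambda_i$ in the second, both of which are forced by the respective failures, and nothing in the argument requires $p$ odd, consistent with the theorem being stated for all primes.
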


The following is easily verified by direct calculation:
\begin{thm}\label{Rlat}For any $\h a, \h b \in \Lambda(G)$,
\begin{enumerate}
\item[(i)] $R(\h a) \cap R(\h b) = R(\h a \wedge \h b)$
\item[(ii)] $\langle R(\h a), R(\h b) \rangle = R(\h a \vee \h b)$
\item[(iii)] $|R(\h a)|=p^{\sum_{i=1}^n a_i}$
\end{enumerate}
\end{thm}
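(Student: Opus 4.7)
The plan is to reduce everything to the coordinate-wise structure of $G=Z_{p^{\lambda_1}}\times\cdots\times Z_{p^{\lambda_n}}$, exploiting the fact that the subgroup lattice of each cyclic factor $Z_{p^{\lambda_i}}$ is a chain.

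First I would verify that $R(\h a)$ admits a clean direct-product description. Unwinding the definition, an element $g=(g_1,\dots,g_n)$ lies in $R(\h a)=\bigcup_{\h b\leq \h a}T(\h b)$ if and only if there exists $\h b\leq \h a$ with $|g_i|=p^{b_i}$ for every $i$, which is equivalent to $|g_i|$ dividing $p^{a_i}$ for every $i$, equivalently $g_i^{p^{a_i}}=1$ for every $i$. Writing $H_i(\h a)$ for the unique subgroup of $Z_{p^{\lambda_i}}$ of order $p^{a_i}$ (which exists and is unique precisely because $0\leq a_i\leq \lambda_i$), we obtain
$$R(\h a)=H_1(\h a)\times H_2(\h a)\times \cdots\times H_n(\h a).$$
This simultaneously confirms that $R(\h a)$ is a subgroup (which is not obvious from the raw definition as a union of types) and makes (iii) immediate, since $|R(\h a)|=\prod_{i=1}^n p^{a_i}=p^{\sum_i a_i}$.

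With the direct-product description in hand, parts (i) and (ii) collapse to standard facts about chains. For (i), intersection commutes with direct products, and inside each cyclic factor the totally ordered subgroup lattice gives $H_i(\h a)\cap H_i(\h b)=H_i(\h a\wedge \h b)$; taking the product over $i$ yields the claim. For (ii), one containment follows from $R(\h a),R(\h b)\subseteq R(\h a\vee \h b)$, and the reverse from noting that in each coordinate the subgroup of order $p^{\max(a_i,b_i)}$ already equals whichever of $H_i(\h a)$ or $H_i(\h b)$ is larger, and so lies inside $\langle R(\h a),R(\h b)\rangle$.

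There is no genuine obstacle here: the only ``step'' is recognizing the product description of $R(\h a)$, after which (i)--(iii) all follow from elementary facts about products of chains. This is why the authors call the result ``easily verified by direct calculation.''
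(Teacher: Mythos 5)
Your proof is correct, and since the paper offers no proof of Theorem \ref{Rlat} beyond the remark that it is ``easily verified by direct calculation,'' your argument is precisely the calculation being alluded to: the key observation that $R(\h a)=H_1(\h a)\times\cdots\times H_n(\h a)$, where $H_i(\h a)$ is the unique subgroup of $Z_{p^{\lambda_i}}$ of order $p^{a_i}$, reduces all three parts to the chain structure of the subgroup lattice of each cyclic factor. Nothing further is needed.
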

From (i) and (ii), it follows that the regular characteristic subgroups form a sublattice of $\Char(G)$. Using Theorem \ref{canchar}, this then implies that if $\h a$ and $\h b$ are canonical tuples then so are $\h a \wedge \h b$ and $\h a \vee \h b$. (This is also not difficult to verify directly.) Thus the canonical tuples form a sublattice of $\Lambda(G)$; this sublattice will be denoted by $\m C(G)$.

The following theorem is of great importance to us:
\begin{thm}[Miller-Baer]\label{charreg}
If $p \neq 2$, then every characteristic subgroup of $G$ is regular.
\end{thm}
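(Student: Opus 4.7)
The plan is to exhibit, for each characteristic subgroup $H$ of $G$, a tuple $\h a \in \Lambda(G)$ with $H = R(\h a)$. Since $H$ is $\Aut(G)$-invariant it is a union of automorphism classes, and by Lemma~\ref{lemtype} each automorphism class is itself a union of types; hence $H = \bigcup_{\h b \in B} T(\h b)$, where $B := \{\h b \in \Lambda(G) : T(\h b) \subseteq H\}$. Set $\h a := \bigvee_{\h b \in B} \h b \in \Lambda(G)$. The inclusion $H \subseteq R(\h a)$ is then immediate from the definition of $R(\h a)$: if $g \in H$ has type $\h b \in B$, then $\h b \leq \h a$, so $g \in T(\h b) \subseteq R(\h a)$.

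The reverse inclusion reduces to the following key lemma: $\langle T(\h b)\rangle = R(\h b)$ for every $\h b \in \Lambda(G)$. Granted this, each $\h b \in B$ yields $R(\h b) \subseteq H$ (since $H$ is a subgroup containing $T(\h b)$), and iterating Theorem~\ref{Rlat}(ii) gives
$$R(\h a) \;=\; \bigl\langle R(\h b) : \h b \in B \bigr\rangle \;\subseteq\; H,$$
which together with the opposite inclusion shows $H = R(\h a)$.

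The key lemma is the main obstacle and is precisely where the hypothesis $p \neq 2$ enters. I would prove it by the following trick: fix $\h b \in \Lambda(G)$, pick any $g = (g_1,\ldots,g_n) \in T(\h b)$, and for each coordinate $i$ with $b_i \geq 1$ form $g' := (g_1,\ldots,g_{i-1},\,g_i^2,\,g_{i+1},\ldots,g_n)$. Since $p$ is odd, $2$ is a unit in $\Z/p^{\lambda_i}\Z$, so $g_i^2$ has the same order as $g_i$; thus $g' \in T(\h b)$. Then $g^{-1}g' = (1,\ldots,1,g_i,1,\ldots,1)$ lies in $\langle T(\h b)\rangle$, is supported only in the $i$th coordinate, and has order $p^{b_i}$. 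As $i$ ranges over all indices with $b_i \geq 1$, such coordinate-supported elements visibly generate $R(\h b)$. For $p = 2$ the substitution $g_i \mapsto g_i^2$ strictly decreases orders at the top level, so $g' \notin T(\h b)$ and the trick collapses—consistent with the fact that non-regular characteristic subgroups do occur in $2$-groups.

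Once $H = R(\h a)$ is established the theorem is proved; canonicity of $\h a$ follows a posteriori from Theorem~\ref{canchar}, but is not needed for the regularity claim itself.
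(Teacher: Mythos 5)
The paper states Theorem \ref{charreg} without proof, presenting it as a classical result of Miller and Baer and deferring proofs to the references, so there is no in-paper argument to compare yours against; assessed on its own terms, your proof is correct. The reduction is sound: a characteristic subgroup $H$ is a union of automorphism classes, hence by Lemma \ref{lemtype} a union of types, and writing $H=\bigcup_{\h b\in B}T(\h b)$ with $\h a=\bigvee_{\h b\in B}\h b$ gives $H\subseteq R(\h a)$ immediately, while the reverse inclusion follows from your key lemma $\langle T(\h b)\rangle=R(\h b)$ together with iterated use of Theorem \ref{Rlat}(ii). The key lemma's proof is also right: the squaring trick places each coordinate vector $(1,\dots,1,g_i,1,\dots,1)$ in $\langle T(\h b)\rangle$, and these generate $\prod_i\langle g_i\rangle$, which equals $R(\h b)$ because a cyclic $p$-group has a unique subgroup of each order --- a one-line justification you should state explicitly rather than call ``visible''; the containment $\langle T(\h b)\rangle\subseteq R(\h b)$ holds since $R(\h b)$ is a subgroup containing $T(\h b)$. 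You also correctly isolate where $p\neq 2$ enters (squaring preserves orders only for odd $p$), and the failure for $p=2$ is genuine: in $Z_2\times Z_4$ one has $T(1,1)=\{(s,t^2)\}$, so $\langle T(1,1)\rangle$ has order $2$ while $R(1,1)$ has order $4$, matching the standard non-regular characteristic subgroup of that group. In short, this is a clean, elementary, self-contained proof of a result the paper only quotes.
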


We then have the following important corollaries:
\begin{cor}\label{corcong}
If $p \neq 2$, then $\Char(G) \cong \m C(G)$.
\end{cor}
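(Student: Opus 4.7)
The plan is to show that the map $\Phi: \m C(G) \to \Char(G)$ defined by $\Phi(\h a) = R(\h a)$ is a lattice isomorphism. Well-definedness is immediate from the forward direction of Theorem \ref{canchar} (if $\h a$ is canonical then $R(\h a)$ is characteristic), and the fact that $\Phi$ preserves meets and joins follows at once from parts (i) and (ii) of Theorem \ref{Rlat}: namely $R(\h a) \cap R(\h b) = R(\h a \wedge \h b)$ and $\langle R(\h a), R(\h b)\rangle = R(\h a \vee \h b)$. Here I also need to know that $\h a \wedge \h b$ and $\h a \vee \h b$ are themselves canonical, which is precisely the observation made in the paragraph following Theorem \ref{Rlat}.

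For surjectivity, I would invoke the Miller--Baer Theorem (Theorem \ref{charreg}), which is where the hypothesis $p \neq 2$ enters: any characteristic subgroup $H$ of $G$ is regular, so $H = R(\h a)$ for some $\h a \in \Lambda(G)$. Then the reverse direction of Theorem \ref{canchar} (if $R(\h a)$ is characteristic then $\h a$ is canonical) shows that this $\h a$ actually lies in $\m C(G)$, giving $H = \Phi(\h a)$.

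The main obstacle --- though it is a minor one --- is injectivity of $\Phi$, since a priori different tuples can give the same regular subgroup. Suppose $\h a, \h b \in \m C(G)$ satisfy $R(\h a) = R(\h b)$. Applying Theorem \ref{Rlat}(i) gives $R(\h a \wedge \h b) = R(\h a) \cap R(\h b) = R(\h a)$, and then comparing orders via Theorem \ref{Rlat}(iii) yields $\sum_{i=1}^n (a_i \wedge b_i) = \sum_{i=1}^n a_i$. Since each term satisfies $a_i \wedge b_i \leq a_i$, these inequalities must all be equalities, so $\h a \leq \h b$; by symmetry $\h a = \h b$. Notably this injectivity argument does not itself use $p \neq 2$ --- the prime hypothesis is needed only to invoke Theorem \ref{charreg} for surjectivity.
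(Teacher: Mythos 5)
Your proposal is correct and follows exactly the route the paper intends: the paper states this as an immediate corollary of Theorems \ref{canchar}, \ref{Rlat}, and \ref{charreg} without writing out a proof, and your argument (the map $\h a \mapsto R(\h a)$ is a well-defined lattice homomorphism by \ref{canchar} and \ref{Rlat}(i),(ii), surjective by Miller--Baer, injective by the order count in \ref{Rlat}(iii)) supplies precisely the missing details. Your observation that $p \neq 2$ enters only through surjectivity is also accurate.
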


\begin{cor}[Miller-Baer-Birkhoff]\label{num_autos}
For any prime $p$, the number of automorphism classes of $G=Z_{p^{\lambda_1}}\times Z_{p^{\lambda_2}}\times \cdots \times Z_{p^{\lambda_n}}$ is $$\prod_{i=1}^n(\lambda_i-\lambda_{i-1}+1).$$
If $p\neq 2$, then this is also the number of characteristic subgroups of $G$.
\end{cor}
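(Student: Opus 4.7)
The plan is to separate the two claims: the count of automorphism classes reduces to counting canonical tuples, which is a clean product by a change of variables, and the count of characteristic subgroups (for $p\neq 2$) is then immediate from Corollary \ref{corcong}.

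First, by Theorem \ref{unican} every automorphism class of $G$ contains exactly one canonical tuple, so the number of automorphism classes equals $|\m{C}(G)|$. A canonical tuple $\h a=(a_1,\dots,a_n)\in\m{C}(G)$ is, by Definition \ref{defcan} together with the defining inequalities $\h0\le\h a\le\h\lambda(G)$ of $\Lambda(G)$, characterized by $0\le a_1\le a_2\le\cdots\le a_n$ together with $a_{i+1}-a_i\le\lambda_{i+1}-\lambda_i$ for $i=1,\dots,n-1$ and $a_1\le\lambda_1$. I would then introduce the consecutive differences $d_i=a_i-a_{i-1}$ for $i=1,\dots,n$, using the convention $a_0=\lambda_0=0$. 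These conditions translate exactly to the independent constraints $0\le d_i\le\lambda_i-\lambda_{i-1}$: the upper bound $a_i\le\lambda_i$ becomes automatic because telescoping gives $a_i=\sum_{j=1}^i d_j\le\sum_{j=1}^i(\lambda_j-\lambda_{j-1})=\lambda_i$. Since each $d_i$ varies independently through $\lambda_i-\lambda_{i-1}+1$ values, we obtain
$$|\m{C}(G)|=\prod_{i=1}^n(\lambda_i-\lambda_{i-1}+1),$$
as required.

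For the second claim, assuming $p\neq 2$, Corollary \ref{corcong} gives a lattice isomorphism $\Char(G)\cong\m{C}(G)$ (via $\h a\mapsto R(\h a)$), so in particular $|\Char(G)|=|\m{C}(G)|$, and the same product formula holds.

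There is no real obstacle here; the argument is essentially bookkeeping layered on the structural theorems already proved. The only point that deserves a sentence of care is the verification that the explicit membership constraint $\h a\le\h\lambda(G)$ from $\Lambda(G)$ is automatically absorbed once the increments $d_i$ are bounded by $\lambda_i-\lambda_{i-1}$, so that the count genuinely factors as an independent product.
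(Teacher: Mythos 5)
Your proposal is correct and follows essentially the same route as the paper: both reduce the count to canonical tuples via Theorem \ref{unican}, observe that the defining inequalities amount to $a_{i-1}\le a_i\le a_{i-1}+\lambda_i-\lambda_{i-1}$ with the bound $a_i\le\lambda_i$ absorbed automatically, and then invoke Corollary \ref{corcong} (equivalently Theorems \ref{canchar} and \ref{charreg}) for the characteristic-subgroup count. Your explicit change of variables to the increments $d_i$ is just a slightly more spelled-out version of the paper's coordinate-by-coordinate count.
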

\begin{proof}
We apply Corollary \ref{corcong}, observing that the canonical tuples $\h a$ are precisely those which satisfy $a_{i-1} \leq a_i \leq a_{i-1}+\lambda_i-\lambda_{i-1}$ for each $i \in \{1,\dots,n\}$. Thus there are $\lambda_i-\lambda_{i-1}+1$ choices for each coordinate $a_i$, and the first statement follows. The second follows from the first and Theorems \ref{canchar} and \ref{charreg}.
\end{proof}

We define $\m W(G)$ to be the subalgebra of elements of $FG$ which are invariant under $\Aut(G)$. Thus, $\m W(G)$ is the maximum rational S-ring over $FG$; its basic sets are the automorphism classes of $G$. We then obtain one last corollary:

\begin{cor}\label{charind}
If $p \neq 2$, then $\q{\Char(G)}$ is a basis for $\m W(G)$.
\end{cor}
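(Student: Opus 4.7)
The plan is to show that $\q{\Char(G)}$ sits inside $\m W(G)$ and has exactly the right cardinality to be a basis, then to establish linear independence via a triangular argument.

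First I would observe that $F\q{\Char(G)} \subseteq \m W(G)$: each $\q H$ for $H$ characteristic is fixed by every $\phi \in \Aut(G)$, so lies in $\m W(G)$ (equivalently, Theorem \ref{latsring}(v) says $F\q{\Char(G)}$ is a rational PS-ring, hence contained in the maximum one). Since the automorphism class sums form a basis of $\m W(G)$, we have $\dim \m W(G) = $ number of automorphism classes of $G$, and by Corollary \ref{num_autos} this equals $|\Char(G)| = |\q{\Char(G)}|$ (using $p \neq 2$). Thus it suffices to prove that the elements of $\q{\Char(G)}$ are linearly independent in $FG$.

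For linear independence I would use the description of $\Char(G)$ from Theorems \ref{canchar} and \ref{charreg}: for $p \neq 2$, every characteristic subgroup is of the form $R(\h a)$ for a unique canonical $\h a \in \m C(G)$, and the automorphism classes are likewise indexed bijectively by $\m C(G)$ via $\h c \mapsto O(\h c)$ (Theorem \ref{unican}). The key identity is
\[
R(\h a) \;=\; \bigcup_{\substack{\h c \in \m C(G) \\ \h c \leq \h a}} O(\h c) \qquad (\h a \in \m C(G)).
\]
To verify this, note that if $\h c \in \m C(G)$ with $\h c \leq \h a$, then $T(\h c) \subseteq R(\h a)$; since $R(\h a)$ is characteristic, the whole orbit $O(\h c)$ lies in $R(\h a)$. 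Conversely, $R(\h a) = \bigcup_{\h b \leq \h a} T(\h b)$, and each such $T(\h b)$ lies in some $O(\h c)$ with $\h c \in \m C(G)$, which in turn lies in $R(\h a)$ by the same argument, forcing $T(\h c) \subseteq R(\h a)$ and hence $\h c \leq \h a$.

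This identity gives
\[
\q{R(\h a)} \;=\; \sum_{\substack{\h c \in \m C(G) \\ \h c \leq \h a}} \q{O(\h c)},
\]
so the matrix expressing $\{\q{R(\h a)}\}$ in the $\m W(G)$-basis $\{\q{O(\h c)}\}$ (both indexed by $\m C(G)$) has $1$'s on the diagonal and, under any linear extension of the partial order $\leq$ on $\m C(G)$, is upper triangular. Hence it is invertible, the set $\q{\Char(G)}$ is linearly independent, and by the dimension count it is a basis of $\m W(G)$. The only step that requires any care is the set-theoretic identity above; everything else is bookkeeping against the groundwork already laid in this section.
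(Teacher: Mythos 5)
Your proposal is correct and follows essentially the same route as the paper: both rest on the identity $\q{R(\h a)}=\sum_{\h c\in\m C(G),\,\h c\le\h a}\q{O(\h c)}$ together with the dimension count from Corollary \ref{num_autos}. The only cosmetic difference is that you deduce linear independence from the unitriangularity of the change-of-basis matrix, whereas the paper inverts the same identity by induction to show that $\q{\Char(G)}$ spans $\m W(G)$; these are two equivalent readings of the same triangular relation.
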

\begin{proof}
Clearly $\q{\Char(G)} \subseteq \m W(G)$. By Theorem \ref{unican}, the S-ring $\m W(G)$ is spanned by $\{\q{O(\h a)} : \h a \in \m C(G)\}$. For any canonical tuple $\h a\in\m C(G)$, we can write $$\q{O(\h a)} = \q{R(\h a)}-\sum_{\underset{\h b < \h a}{\h b \in \m C(G)}} \q{O(\h b)},$$
and, since $\q{R(\h a)} \in \q{\Char(G)}$, it follows by induction that each $\q{O(\h a)}$ is in the span of $\q{\Char(G)}$. Since $\dim(\m W(G))=|\Char(G)|$ by Corollary \ref{num_autos}, the result follows.
\end{proof}

\begin{example}\label{nonlatsring}
Let $G=Z_p\times Z_{p^3}$ for an odd prime $p$. Then the characteristic subgroups of $G$ are shown in Table \ref{char13}.
\end{example}

\begin{table}
\centering
\caption{Characteristic subgroups of $G=Z_p\times Z_{p^3}$ for odd prime $p$}\label{char13}

\begin{tabular}{lr}
\begin{tabular}{|l|l|} \hline
$H_1$&R(0,0)\\ \hline
$H_2$&R(0,1)\\ \hline
$H_3$&R(0,2)\\ \hline
$H_4$&R(1,1)\\ \hline
$H_5$&R(1,2)\\ \hline
$H_6$&R(1,3)\\ \hline
\end{tabular}
&
\begin{tabular}{r}
\psset{xunit=.2cm,yunit=.2cm,labelsep=2.5mm}
\begin{pspicture*}(-10,-10)(10,22)
\Rput[t](0,0){$H_1$}

\Rput[t](0,5){$H_2$}
\psline(0,-.5)(0,1.5)

\Rput[t](-5,10){$H_3$}
\Rput[t](5,10){$H_4$}
\psline(-1.5,4.5)(-3.5,6.5)
\psline(1.5,4.5)(3.5,6.5)

\Rput[t](0,15){$H_5$}
\psline(-3.5,9.5)(-1.5,11.5)
\psline(3.5,9.5)(1.5,11.5)

\Rput[t](0,20){$H_6$}
\psline(0,14.5)(0,16.5)
\end{pspicture*}
\end{tabular}
\end{tabular}
\end{table}

\begin{example}
Let $G=Z_p\times Z_{p^3}\times Z_{p^5}$ for an odd prime $p$. Then the characteristic subgroups of $G$ are shown in Table \ref{char135}. Note that the sublattice of $\Char(G)$ between $H_5=R(0,1,2)$ and $H_{14}=R(1,2,3)$ forms a cube; i.e., this sublattice is isomorphic to the boolean lattice $\m P(X)$ of subsets of a set $X$ of cardinality 3.
\end{example}

\begin{table}
\centering
\caption{Characteristic subgroups of $G=Z_p\times Z_{p^3}\times Z_{p^5}$ for odd prime $p$}\label{char135}

\begin{tabular}{lr}
\begin{tabular}{|l|l|} \hline
$H_1$&R(0,0,0)\\ \hline
$H_2$&R(0,0,1)\\ \hline
$H_3$&R(0,0,2)\\ \hline
$H_4$&R(0,1,1)\\ \hline
$H_5$&R(0,1,2)\\ \hline
$H_6$&R(0,1,3)\\ \hline
$H_7$&R(0,2,2)\\ \hline
$H_8$&R(0,2,3)\\ \hline
$H_9$&R(0,2,4)\\ \hline
$H_{10}$&R(1,1,1)\\ \hline
$H_{11}$&R(1,1,2)\\ \hline
$H_{12}$&R(1,1,3)\\ \hline
$H_{13}$&R(1,2,2)\\ \hline
$H_{14}$&R(1,2,3)\\ \hline
$H_{15}$&R(1,2,4)\\ \hline
$H_{16}$&R(1,3,3)\\ \hline
$H_{17}$&R(1,3,4)\\ \hline
$H_{18}$&R(1,3,5)\\ \hline
\end{tabular}
&
\begin{tabular}{r}
\psset{xunit=.2cm,yunit=.2cm,labelsep=2.5mm}
\begin{pspicture*}(-15,-5)(15,47)
\Rput[t](0,0){$H_1$}

\Rput[t](0,5){$H_2$}
\psline(0,-.5)(0,1.5)

\Rput[t](-5,10){$H_3$}
\Rput[t](5,10){$H_4$}
\psline(-1.5,4.5)(-3.5,6.5)
\psline(1.5,4.5)(3.5,6.5)

\Rput[t](-5,15){$H_5$}
\Rput[t](5,15){$H_{10}$}
\psline(-5,9.5)(-5,11.5)
\psline(5,9.5)(5,11.5)
\psline(3.5,9.5)(-3.5,11.5)

\Rput[t](-5,20){$H_6$}
\Rput[t](0,20){$H_7$}
\Rput[t](5,20){$H_{11}$}
\psline(-5,14.5)(-5,16.5)
\psline(-4.25,14.5)(-1.5,16.5)
\psline(-3.5,14.5)(3.5,16.5)
\psline(5,14.5)(5,16.5)

\Rput[t](-5,25){$H_8$}
\Rput[t](0,25){$H_{12}$}
\Rput[t](5,25){$H_{13}$}
\psline(-3.5,19.5)(-1.5,21.5)
\psline(-1.5,19.5)(-3.5,21.5)
\psline(3.5,19.5)(1.5,21.5)
\psline(1.5,19.5)(3.5,21.5)
\psline(-5,19.5)(-5,21.5)
\psline(5,19.5)(5,21.5)

\Rput[t](-5,30){$H_9$}
\Rput[t](5,30){$H_{14}$}
\psline(5,24.5)(5,26.5)
\psline(1.5,24.5)(4.25,26.5)
\psline(-3.5,24.5)(3.5,26.5)
\psline(-5,24.5)(-5,26.5)

\Rput[t](-5,35){$H_{15}$}
\Rput[t](5,35){$H_{16}$}
\psline(-5,29.5)(-5,31.5)
\psline(5,29.5)(5,31.5)
\psline(3.5,29.5)(-3.5,31.5)

\Rput[t](0,40){$H_{17}$}
\psline(-3.5,34.5)(-1.5,36.5)
\psline(3.5,34.5)(1.5,36.5)

\Rput[t](0,45){$H_{18}$}
\psline(0,39.5)(0,41.5)
\end{pspicture*}
\end{tabular}
\end{tabular}
\end{table}

The following theorem gives a generalization of preceding two examples which will be important to us in \S\ref{sec_main}:
\begin{thm}\label{latbool}
Let $X$ be a (finite) set containing $n$ elements, and let
$$G=Z_p \times Z_{p^3}\times\cdots\times\Z_{p^{2n-1}}.$$ 
Then there is an embedding $\psi$ of the boolean lattice $\m P(X)$ into $\Char(G)$. Further, $\psi$ has the property that for all subsets $Y_1, Y_2$ of $X$, $|Y_1|=|Y_2| \iff |\psi(Y_1)|=|\psi(Y_2)|$.
\end{thm}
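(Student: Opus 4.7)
The plan is to exhibit the embedding $\psi$ explicitly. Fix a bijection between $X$ and $\{1,2,\dots,n\}$, so that subsets $Y \subseteq X$ correspond to characteristic functions $\chi_Y : \{1,\dots,n\} \to \{0,1\}$. For each $Y \subseteq X$, define the tuple $\h a(Y) = (a_1(Y),\dots,a_n(Y))$ by
$$a_i(Y) = (i-1) + \chi_Y(i),$$
and set $\psi(Y) = R(\h a(Y))$. Note that in our group, $\h\lambda(G) = (1,3,\dots,2n-1)$, so $\lambda_i - \lambda_{i-1} = 2$ for each $i \geq 2$.

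First I would verify that each $\h a(Y)$ lies in $\m C(G)$. We have $a_{i+1}(Y) - a_i(Y) = 1 + \chi_Y(i+1) - \chi_Y(i) \in \{0,1,2\}$, so conditions (i) and (ii) of Definition \ref{defcan} are both satisfied (using $\lambda_{i+1}-\lambda_i = 2$), confirming that $\h a(Y)$ is canonical. By Theorem \ref{canchar}, $\psi(Y) = R(\h a(Y))$ is therefore a characteristic subgroup of $G$.

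Next I would verify that $\psi$ is an injective lattice homomorphism. Since $Y$ is recovered from $\h a(Y)$ via $\chi_Y(i) = a_i(Y) - (i-1)$, the map $Y \mapsto \h a(Y)$ is injective, and so is $\psi$ (the map $R$ is injective on $\m C(G)$ by Corollary \ref{corcong}, or one can appeal directly to Theorem \ref{Rlat}(iii)). For any $Y_1, Y_2 \subseteq X$ and any $i$, the identities $\chi_{Y_1 \cap Y_2}(i) = \min\{\chi_{Y_1}(i), \chi_{Y_2}(i)\}$ and $\chi_{Y_1 \cup Y_2}(i) = \max\{\chi_{Y_1}(i), \chi_{Y_2}(i)\}$ yield $\h a(Y_1 \cap Y_2) = \h a(Y_1) \wedge \h a(Y_2)$ and $\h a(Y_1 \cup Y_2) = \h a(Y_1) \vee \h a(Y_2)$. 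Combined with Theorem \ref{Rlat}(i)--(ii), this shows that $\psi$ preserves meets and joins, so $\psi$ is a lattice embedding $\m P(X) \hookrightarrow \Char(G)$.

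Finally, for the size condition, Theorem \ref{Rlat}(iii) gives
$$|\psi(Y)| = p^{\sum_{i=1}^n a_i(Y)} = p^{\binom{n}{2} + |Y|},$$
which depends only on $|Y|$, and is strictly increasing in $|Y|$. Hence $|Y_1| = |Y_2|$ if and only if $|\psi(Y_1)| = |\psi(Y_2)|$. There is no real obstacle here: the whole argument is a direct check once one notices that taking the tuple $(0,1,2,\dots,n-1)$ as the ``bottom'' and $(1,2,3,\dots,n)$ as the ``top'' produces a Boolean interval in $\m C(G)$, precisely because the gaps $\lambda_{i+1}-\lambda_i = 2$ allow each coordinate to be independently toggled between $i-1$ and $i$ without violating canonicality.
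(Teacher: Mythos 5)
Your proposal is correct and follows essentially the same route as the paper: the same explicit tuple $\h a(Y)$ with $a_i = (i-1)+\chi_Y(i)$, the same canonicality check using the gaps $\lambda_{i+1}-\lambda_i=2$, the same appeal to Theorem \ref{Rlat}(i)--(iii) for the lattice-homomorphism and cardinality claims. The only (welcome) addition is that you explicitly justify injectivity of $R$ on canonical tuples, which the paper leaves implicit.
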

\begin{proof}
We will map $\m P(X)$ onto the sublattice of $\Char(G)$ between $R(0,1,2,\dots,n-1)$ and $R(1,2,3,\dots,n)$ in the following way: Write $X=\{x_1,\dots,x_n\}$; then, for $Y \subseteq X$ define $\psi(Y)=R(\h a)$, where $\h a=(a_1,\dots,a_n)$ is given by
$$a_i=\begin{cases}i-1, & \text{if $x_i\notin Y$}\\ i, & \text{if $x_i\in Y$}\end{cases}$$
Note that $\psi$ is well-defined since each the tuple $\h a$, as thus defined, is always canonical, since the difference between two consecutive components is either 0, 1, or 2. We have
\begin{align*}
\psi(Y_1 \cap Y_2)&=R(\h a(Y_1 \cap Y_2))=R(\h a(Y_1) \wedge \h a(Y_2))\\
&= R(\h a(Y_1)) \cap R(\h a(Y_2)) = \psi(Y_1) \cap \psi(Y_2)
\end{align*} and
\begin{align*}
\psi(Y_1 \cup Y_2)&=R(\h a(Y_1 \cup Y_2))=R(\h a(Y_1) \vee \h a(Y_2)) \\
&= \langle R(\h a(Y_1)), R(\h a(Y_2)) \rangle = \langle \psi(Y_1), \psi(Y_2)\rangle,
\end{align*} so that $\psi$ is a lattice homomorphism. By construction $\psi$ is injective, so $\psi$ is an embedding. Theorem \ref{Rlat}(iii) shows that $|\psi(Y)|=p^{\frac{n(n-1)}2+|Y|}$, and the last claim follows.
\end{proof}

\section{Main Theorem}\label{sec_main}

We now turn to our main result. In this section, we take all S-rings over a coefficient field $F$ of characteristic $0$.

\begin{thm}\emph{(Main Theorem)}\label{mainthm}
Let $p$ be an odd prime. Every finite group can be represented as the automorphism group of a rational S-ring over an abelian $p$-group.
\end{thm}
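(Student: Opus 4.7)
The plan is to combine Birkhoff's theorem (every finite group is the automorphism group of some finite distributive lattice) with the embedding result Theorem \ref{latbool}. Given a finite group $H$, choose a finite distributive lattice $L$ with $\Aut(L) \cong H$, and let $X = J(L)$ denote its set of join-irreducibles, $n = |X|$. Birkhoff's representation embeds $L$ into the Boolean lattice $\m P(X)$ via $\ell \mapsto \{j \in X : j \leq \ell\}$, rank-preservingly: the rank of $\ell$ in $L$ equals the cardinality of its image. Composing with the embedding $\psi: \m P(X) \hookrightarrow \Char(G)$ from Theorem \ref{latbool}, for $G = Z_p \times Z_{p^3} \times \cdots \times Z_{p^{2n-1}}$, yields a sublattice $\m L_0 \subseteq \Char(G)$ isomorphic to $L$. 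Set $\m L = \m L_0 \cup \{1, G\}$; this is still a sublattice of $\Char(G)$ (since $1$ and $G$ sit below and above everything), so by Theorem \ref{latsring}, $S := F\q{\m L}$ is a rational S-ring over $G$.

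The crux of the argument is the claim that the $S$-subgroups of $S$ are precisely $\m L$. Since $S$ is rational, every $S$-subgroup is a characteristic subgroup of $G$. By Corollary \ref{charind}, $\q{\Char(G)}$ is a basis of $\m W(G) \supseteq S$, and $\{\q H : H \in \m L\}$ is a subset of this basis that spans $S$. Hence $\q K \in S$ for a characteristic $K$ forces $\q K = \q H$ for some $H \in \m L$, whence $K \in \m L$.

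Granted this, any $\phi \in \Aut(S)$ permutes $\m L$; the relations $\q H \circ \q K = \q{H \cap K}$ and $\q H\ \q K = |H \cap K|\q{HK}$ together with Theorem \ref{isohad} force $\phi|_{\m L}$ to respect intersections and joins, so $\phi|_{\m L} \in \Aut(\m L)$. Conversely, any $\sigma \in \Aut(\m L)$ lifts to an $F$-linear map $\tilde\sigma: S \to S$ defined by $\tilde\sigma(\q H) = \q{\sigma(H)}$; this is an algebra map respecting $\circ$, hence an S-ring automorphism by Theorem \ref{isohad}. Preserving the algebra structure requires $|\sigma(H) \cap \sigma(K)| = |H \cap K|$, which holds because $\sigma$ preserves rank in $L$, rank in $L$ equals cardinality in $\m P(X)$ under the rank-preserving Birkhoff embedding, and the order of $\psi(Y)$ is determined by $|Y|$ via Theorem \ref{latbool}. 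Finally, any lattice automorphism of $\m L$ fixes the extremal elements $1$ and $G$, so $\Aut(\m L) \cong \Aut(\m L_0) \cong \Aut(L) \cong H$, and the two constructions invert one another to give $\Aut(S) \cong H$.

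The main obstacle is the identification of the $S$-subgroups of $S$ with $\m L$, without which $\Aut(S)$ could in principle involve permutations of subgroups outside $\m L$; Corollary \ref{charind} provides a clean resolution, and crucially relies on the hypothesis that $p$ is odd. A secondary technical point is the rank/order compatibility between the Birkhoff embedding and $\psi$, which ensures that abstract lattice automorphisms of $L$ lift to honest algebra automorphisms of $S$.
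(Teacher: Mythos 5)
Your proposal is correct and follows essentially the same route as the paper: Birkhoff's representation of the group as $\Aut$ of a distributive lattice, the join-irreducible embedding into a boolean lattice, Theorem \ref{latbool}, Theorem \ref{latsring}, and Corollary \ref{charind} to pin down the $S$-subgroups and hence get surjectivity of the lift $\Aut(\m L)\to\Aut(S)$. Your rank-based justification that lattice automorphisms preserve $|H|$ is just a repackaging of the paper's Theorem \ref{latsize} (automorphisms permute the join-irreducibles), so there is no substantive difference.
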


The proof relies on three key ideas:

\begin{enumerate}
\item[(1)] Every finite group can be represented as the automorphism group of a finite distributive lattice,
\item[(2)] Every distributive lattice can be embedded in a boolean lattice, and
\item[(3)] The lattice of characteristic subgroups of the group $Z_p \times Z_{p^3}\times\cdots\times Z_{p^{2n-1}}$ contains a boolean sublattice of order $2^n$.
\end{enumerate}

The proof of the Main Theorem, in outline, goes as follows: Given a group $G$, by (1) there is a distributive lattice with automorphism group isomorphic to $G$. By (2), this lattice may be embedded in a boolean lattice, which by (3) may in turn be embedded as a sublattice of the lattice of characteristic subgroups of an appropriate abelian $p$-group $P$. Finally, using Theorem \ref{latsring}, we construct the rational S-ring over $P$ associated with this lattice and show that the automorphism group of this S-ring is isomorphic to G.

(1) was shown by Birkhoff in \cite{birkhoff}; a proof may also be found in 
\cite[\S7]{kerby_masters}, \cite{gratzer_birkhoff1}, or \cite{gratzer_birkhoff2} (see also \cite{foldes}). (2) is a standard result in lattice theory (also first proven by Birkhoff); a proof may be found in \cite[Theorem 5.12]{davey}, \cite[20.1]{hermes}, \cite[11.3]{crawley}, or \cite[Theorem 6.6]{kerby_masters}. We have already shown (3) in Theorem \ref{latbool} above. So all that remains is to verify the last step of our argument. Before doing this, however, we will need to give a more explicit description of the embedding of (2):

Recall that an element $j$ of a lattice $\m L$ is \emph{join-reducible} if there exist $x<j$ and $y<j$ with $j=x\vee y$. Otherwise $j$ is said to be \emph{join-irreducible}. 

\begin{thm}[Birkhoff]\label{latdist}Let $\m L$ be a finite distributive lattice and let $J \subseteq \m L$ be the set of join-irreducible elements. Then the map $\phi(x)=\{j \in J : j \leq x\}$ is a embedding of $\m L$ into $\m P(J)$.
\end{thm}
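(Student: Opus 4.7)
The plan is to verify that $\phi$ is a lattice homomorphism (preserving both $\wedge$ and $\vee$) and is injective. Injectivity will fall out of the stronger assertion that every $x\in\m L$ can be recovered as $x=\bigvee\phi(x)$, so I would save it for last.

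Preservation of meets requires no distributivity and is essentially a tautology: for any $x,y\in\m L$, a join-irreducible $j$ lies in $\phi(x\wedge y)$ iff $j\leq x\wedge y$ iff $j\leq x$ and $j\leq y$ iff $j\in\phi(x)\cap\phi(y)$.

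Preservation of joins is where distributivity plays its essential role, and this is the main obstacle. The inclusion $\phi(x)\cup\phi(y)\subseteq\phi(x\vee y)$ follows from $x,y\leq x\vee y$. For the reverse, I would show that any join-irreducible $j\leq x\vee y$ satisfies $j\leq x$ or $j\leq y$. Using distributivity, $j=j\wedge(x\vee y)=(j\wedge x)\vee(j\wedge y)$. If both $j\wedge x<j$ and $j\wedge y<j$, this would express $j$ as a join of two strictly smaller elements, contradicting join-irreducibility. Hence $j\wedge x=j$ or $j\wedge y=j$, i.e., $j\leq x$ or $j\leq y$, as needed.

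Finally, I would establish $x=\bigvee\phi(x)$ by strong induction on the number of elements of $\m L$ strictly below $x$. The inequality $\bigvee\phi(x)\leq x$ is immediate. If $x$ itself is join-irreducible, then $x\in\phi(x)$ and equality holds; otherwise $x=y\vee z$ with $y,z<x$, and the inductive hypothesis gives $y=\bigvee\phi(y)$ and $z=\bigvee\phi(z)$, whence $x=\bigvee(\phi(y)\cup\phi(z))\leq\bigvee\phi(x)$ since $\phi(y)\cup\phi(z)\subseteq\phi(x)$. Injectivity is then immediate: $\phi(x)=\phi(y)$ implies $x=\bigvee\phi(x)=\bigvee\phi(y)=y$. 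Apart from the distributivity step, everything else is bookkeeping on the partial order.
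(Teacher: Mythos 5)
Your proof is correct and is the standard argument for Birkhoff's embedding theorem: meet-preservation is automatic, join-preservation is exactly where distributivity enters (via $j=(j\wedge x)\vee(j\wedge y)$ and join-irreducibility), and the identity $x=\bigvee\phi(x)$ by induction yields injectivity. The paper does not prove this result itself but defers to standard references, so there is nothing to compare against; your argument matches the classical one found in those sources.
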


In what follows, we will need the following fact:

\begin{thm}\label{latsize}
In Theorem \ref{latdist}, $|\phi(\alpha(x))|=|\phi(x)|$ for all $\alpha \in \Aut(\m L)$ and $x\in \m L$.
\end{thm}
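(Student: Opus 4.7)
The plan is to show that any lattice automorphism $\alpha$ permutes the set $J$ of join-irreducible elements of $\m L$, and then to verify that $\phi$ intertwines the action of $\alpha$ on $\m L$ with the induced set-wise action of $\alpha$ on $\m P(J)$. Once this is established, the cardinality claim is immediate because $\alpha$ restricts to a bijection of $J$.

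First I would observe that join-irreducibility is defined purely in lattice-theoretic terms (existence of a nontrivial decomposition $j=x\vee y$ with $x<j$ and $y<j$), so it is preserved by any lattice automorphism. Hence, if $\alpha \in \Aut(\m L)$, then $\alpha$ restricts to a bijection $\alpha|_J : J \to J$.

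Next I would compute $\phi(\alpha(x))$ directly: for $j\in J$, the condition $j\leq \alpha(x)$ is equivalent, by applying $\alpha^{-1}$, to $\alpha^{-1}(j)\leq x$. Writing $j=\alpha(j')$ with $j'=\alpha^{-1}(j)\in J$, this gives
\[
\phi(\alpha(x)) = \{\alpha(j') : j'\in J,\ j'\leq x\} = \alpha\bigl(\phi(x)\bigr),
\]
where on the right $\alpha$ acts set-wise on subsets of $J$. Since $\alpha|_J$ is a bijection, $|\alpha(\phi(x))|=|\phi(x)|$, which yields the theorem.

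There is no real obstacle here; the only subtlety worth mentioning is the step that join-irreducibility is preserved under lattice automorphisms, which holds because the property is expressible entirely using $<$ and $\vee$. The proof is effectively a one-line consequence of the fact that $\phi$ is $\Aut(\m L)$-equivariant under the natural induced action on $\m P(J)$.
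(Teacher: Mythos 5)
Your proof is correct and follows essentially the same route as the paper: both establish that $\alpha$ permutes the join-irreducible elements and then compute $\phi(\alpha(x))=\alpha(\phi(x))$, from which the cardinality claim is immediate. Your added remark justifying why join-irreducibility is preserved under lattice automorphisms is a slight elaboration of a step the paper takes for granted, but the argument is the same.
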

\begin{proof}
Since lattice isomorphisms permute the join-irreducible elements among themselves, we have
\begin{align*}
\phi(\alpha(x)) &= \{j \in J : j \leq \alpha(x)\}
= \{\alpha(j) \in J : \alpha(j) \leq \alpha(x)\}\\
&= \{\alpha(j) \in J : j \leq x \} = \alpha(\phi(x)),
\end{align*}
hence $|\phi(\alpha(x))|=|\alpha(\phi(x))|=|\phi(x)|$. 
\end{proof}

Now we are able to prove the Main Theorem. Given a finite group $G$,
by (1) there is a
finite distributive lattice $\m D$ with $\Aut(D) \cong G$. By
Theorem \ref{latdist}, there is an embedding $\phi$ of $\m D$ into
the boolean lattice $\m P(J)$, where $J$ is the set of
join-irreducible elements of $\m D$. In turn, by Theorem
\ref{latbool}, there is an embedding $\psi$ of $\m P(J)$ into the
lattice of characteristic subgroups $\Char(P)$ of an appropriate
abelian $p$-group $P$. Let $\m L=\psi(\phi(D))\cup\{1\}\cup\{P\}$. Then by Theorem \ref{latsring}, $F\q{\m L}$ is an S-ring over $P$. To prove the theorem, we will show that $\Aut(F\q{\m L}) \cong G$.

Since every automorphism of $\m L$ fixes $1$ and $P$ and hence restricts to an automorphism of $\psi(\phi(D))$, it follows that $\Aut(\psi(\phi(D))) \cong \Aut(\m L)$. So since $D \cong \psi(\phi(D))$, we then have $G \cong \Aut(D) \cong \Aut(\psi(\phi(D))) \cong \Aut(\m L)$. So 
it suffices to show $\Aut(\m L) \cong \Aut(F\q{\m L})$.

We define a map $\chi : \Aut(\m L) \to \Aut(F\q{\m L})$ by $\chi(\beta) : \q H \mapsto \q{\beta(H)}$ for $H\in\m L$. We need to justify that $\chi$ is well-defined, i.e., that $\chi(\beta)$ actually is an S-ring automorphism of $\m L$. After that, it will be clear that $\chi$ is an injective homomorphism. Proving the surjectivity of $\chi$ will then complete the proof.

Let $\beta\in \Aut(\m L)$. Since Corollary \ref{charind} implies that $\q{\m L}$ is linearly independent and hence forms a basis for $F\q{\m L}$, it is clear that $f=\chi(\beta)$ is a well-defined bijective linear map from $F\q{\m L}$ to itself. To show that $f$ is an S-ring automorphism, we just need to show that it preserves the Hadamard and ordinary products. Applying Theorem \ref{latsring}, we have
\begin{align*}
f(\q H \circ \q K) &= f(\q{H\cap K}) = \q{\beta(H\cap K)} = \q{\beta(H)\cap\beta(K)} \\
&= \q{\beta(H)} \circ \q{\beta(K)} = f(\q H) \circ f(\q K),
\end{align*}
so $f$ preserves the Hadamard product. Before we can show $f$ preserves the ordinary product, we first need the result that for all $H \in \m L$, $|\beta(H)|=|H|$. For $H=1$ or $H=P$ this is trivially true since in these cases $\beta(H)=H$. For any other $H$, we may write $H=\psi(\phi(x))$ for some $x\in D$. Now, letting $\alpha\in\Aut(D)$ be the composite function $\alpha=\phi^{-1}\psi^{-1}\beta\psi\phi$, we see that showing $|\beta(H)|=|H|$ is equivalent to showing $|\psi(\phi(\alpha(x)))|=|\psi(\phi(x))|$, which by Theorem \ref{latbool} is equivalent to showing $|\phi(\alpha(x))|=|\phi(x)|$, which in turn holds by Theorem \ref{latsize}. So we have proven $|\beta(H)|=|H|$. In particular, if $H,K\in\m L$, we have
$$|H\cap K| = |\beta(H\cap K)|=|\beta(H)\cap\beta(K)|,$$
and from this, by applying Theorem \ref{latsring}, we obtain 
\begin{align*}
f(\q H\ \q K) &= f(|H \cap K|\q{HK})
= |H\cap K| f(\q{HK})\\
&= |H\cap K| \q{\beta(HK)} 
= |H\cap K| \q{\beta(H)\beta(K)}\\
&= |\beta(H)\cap\beta(K)|\q{\beta(H)\beta(K)}
= \q{\beta(H)}\ \q{\beta(K)}
= f(\q H)f(\q K),
\end{align*}
so $f$ preserves the ordinary product, which proves $f\in\Aut(F\q{\m L})$.

Now we only need to show that $\chi$ is surjective. 
Suppose $\gamma$ is any S-ring automorphism of $F\q{\m L}$. For any $H \in \m L$, we have $\gamma(\q H)=\q K$ for some
subgroup $K\leq G$ by Theorem \ref{isosimp}(ii). Since $F\q{\m L}$ is a rational S-ring, $K$ is
necessarily characteristic. By the linear independence of 
characteristic subgroups (Corollary \ref{charind}),
we must have $K \in \m L$. Thus $\gamma$ permutes the basis elements
$\{\q H : H \in \m L\}$ of $F\q{\m L}$. We can then define a bijection
$\beta : \m L \to \m L$ by setting $\beta(H)=K$ where $K$ is the
subgroup of $G$ such that $\gamma(\q H)=\q K$, so that
$\q{\beta(H)}=\gamma(\q H)$ for all $H\in \m L$. Once we show that $\beta$ is a lattice automorphism of $\m L$, we will have $\gamma=\chi(\beta)$, which will complete the proof.

Theorem
\ref{latsring} implies
\begin{align*}
\q{\beta(H \cap K)}
&= \gamma(\q{H \cap K})
=\gamma(\q H \circ \q K)\\
&=\gamma(\q H) \circ \gamma(\q K)
=\q{\beta(H)} \circ \q{\beta(K)}
=\q{\beta(H) \cap \beta(K)}
\end{align*}
so that $\beta(H\cap K)=\beta(H)\cap\beta(K)$. Observe that $|\beta(H)|=|H|$ by Theorem \ref{isosimp}(i), hence 
$$|H\cap K|=|\beta(H\cap K)|=|\beta(H)\cap\beta(K)|,$$
so a further application of Theorem \ref{latsring} implies
\begin{align*}
\q{\beta(HK)}
&= \gamma(\q{HK})
= \gamma\left(\frac1{|H\cap K|}\q H\ \q K\right)
= \frac1{|H\cap K|}\gamma(\q H)\gamma(\q K)\\
&= \frac1{|H\cap K|}\q{\beta(H)}\ \q{\beta(K)}
= \frac1{|\beta(H)\cap\beta(K)|}\q{\beta(H)}\ \q{\beta(K)}
= \q{\beta(H)\beta(K)}
\end{align*}
so that $\beta(HK)=\beta(H)\beta(K)$. This proves that $\beta$ is a lattice automorphism of $\m L$, as desired.

\section{Automorphisms of S-rings over cyclic groups}\label{sec_cyc}

In this section, we again take all S-rings over a coefficient field $F$ of characteristic zero.

In \cite{leung_man2}, Leung and Man give a recursive classification
of all S-rings over cyclic groups. We give a brief description of
this classification. They give three basic methods of
constructing S-rings over a group $G$:

\begin{enumerate}
\item[(I)] Given a subgroup $\Omega \leq \Aut(G)$, let $T_1,\dots,T_n$
be the orbits of $\Omega$ acting on $G$. Then $T_1,\dots,T_n$ form a
Schur partition of $G$.

\item[(II)] Suppose $G=H\times K$ for nontrivial subgroups $H, K \leq G$, and suppose $S_H$ is an S-ring over $H$ with basic sets
$C_1,\dots,C_h$ and $S_K$ is an S-ring over $K$ with basic sets
$D_1,\dots,D_k$. Then the product sets $C_iD_j$, $1 \leq i \leq h, 1
\leq j\leq k$, form a Schur partition of $G$.

\item[(III)] Suppose $H$ and $K$ are nontrivial, proper subgroups of $G$ with $H \leq
K$ and $H \unlhd G$, and let $S_K$ be an S-ring over $K$ with basic
sets $C_1,\dots,C_k$ and $S_{G/H}$ be an S-ring over $G/H$ with
basic sets $D_1,\dots,D_k$, and suppose that $\pi(S_K)=F(K/H)\cap
S_{G/H}$, where $\pi: G \to G/H$ is the natural projection map,
extended to a natural projection map of the group algebra $FG$ onto
$F(G/H)$. Then
$$G=C_1\cup\cdots\cup C_k\cup\{ \pi^{-1}(D_i) : i\in\{1,\dots,k\}, D_i \nsubseteq K/H
\}$$ forms a Schur partition of $G$.
\end{enumerate}

In the case of cyclic groups, the S-ring constructed in (I) is called a \emph{cyclotomic} S-ring (a more general definition of cyclotomic S-rings, together with some of their applications, is found in \cite{muzychuk_ponomarenko}).
The S-ring constructed in (II) is denoted $S_H \cdot S_K$ and is
called the \emph{dot product} (also denoted $S_H \otimes S_K$ and called the \emph{tensor product}) of $S_H$ and $S_K$. The S-ring
constructed in (III) is denoted $S_K \wedge S_{G/H}$ and is called
the \emph{wedge product} (or \emph{generalized wreath product}) of $S_K$ and $S_{G/H}$. 

The main theorem of Leung and Man (\cite[Theorem 3.7]{leung_man2}) may then be stated as follows:

\begin{thm}
Every nontrivial S-ring over a cyclic group $G$ is either cyclotomic, a dot product, or a wedge product.
\end{thm}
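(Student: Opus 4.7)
The plan is to proceed by strong induction on $|G|$. For the base case $|G|=p$ prime, classical Schur--Wielandt theory already gives that every nontrivial S-ring over $Z_p$ is cyclotomic: its basic sets are the orbits of a subgroup of $\Aut(Z_p)=(\Z/p\Z)^*$. For the inductive step, assume the theorem holds for all cyclic groups of smaller order, and let $S$ be a nontrivial S-ring over $G=Z_n$. The natural object to analyze is the lattice of \emph{$S$-subgroups}, i.e.\ subgroups $H\leq G$ with $\q H\in S$, which are automatically normal since $G$ is cyclic. I would split into three cases according to what decompositions this lattice admits.

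First, attempt to find a dot-product decomposition: a pair of $S$-subgroups $H,K$ with $G=H\times K$ (equivalently $\gcd(|H|,|K|)=1$ and $|HK|=|G|$). Setting $S_H=S\cap FH$ and $S_K=S\cap FK$, Theorem~\ref{folklore} shows these are S-rings over $H$ and $K$, and one must verify using the Hadamard and ordinary product structure of $S$ (together with the direct product structure of $G$) that the basic sets of $S$ are precisely the products $C_iD_j$, so that $S=S_H\cdot S_K$; induction then finishes this case.

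Second, if no dot decomposition exists but some nontrivial proper $S$-subgroup does, seek a wedge-product decomposition: find an $S$-subgroup $K$ and an $S$-subgroup $H\leq K$ such that $\pi(S\cap FK)=F(K/H)\cap \pi(S)$, where $\pi\colon FG\to F(G/H)$ is the natural projection. A natural strategy is to take $H$ minimal and $K$ maximal among nontrivial proper $S$-subgroups subject to this compatibility. If such a pair is found, then by construction $S=S_K\wedge S_{G/H}$, and induction applies to both factors since $|K|<|G|$ and $|G/H|<|G|$.

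The main obstacle, and the deepest step of the argument, is the remaining ``primitive'' case, in which $S$ admits neither kind of decomposition; here one must prove $S$ is cyclotomic, i.e.\ that the basic sets of $S$ are exactly the orbits of some $\Omega\leq\Aut(G)=(\Z/n\Z)^*$. The natural candidate is the group of all multipliers $m\in(\Z/n\Z)^*$ such that $x\mapsto x^{(m)}$ permutes the basic sets of $S$; Schur's classical multiplier theorem supplies many such $m$ automatically (every prime $p\nmid n$ is a multiplier). The hard step is to rule out that the $\Omega$-orbits strictly refine the basic sets, i.e.\ to show that in the absence of a dot or wedge decomposition the multiplier action is transitive on each basic set. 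I would expect this to require the detailed structural analysis of primitive S-rings over cyclic $p$-groups of prime-power order that forms the technical heart of Leung and Man's argument, exploiting strong constraints (such as those coming from Schur's theorem on multipliers combined with the explicit structure of $(\Z/p^k\Z)^*$) to show that any basic set strictly larger than a multiplier orbit would in fact yield an $S$-subgroup violating primitivity.
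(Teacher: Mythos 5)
The first thing to say is that the paper contains no proof of this statement: it is quoted verbatim from Leung and Man, whose argument occupies two substantial papers, so your proposal can only be judged on its own terms. On those terms it has a structural flaw that goes beyond the admitted reliance on ``the technical heart of Leung and Man's argument.'' Your case division routes every imprimitive $S$ that is not a dot product into the wedge case and reserves the cyclotomic conclusion for the primitive case. That routing is false. Identify $Z_{15}$ with $\Z/15\Z$ and let $S$ be the cyclotomic S-ring whose basic sets are the orbits of $\Omega=\langle 2\rangle\leq(\Z/15\Z)^{*}$, namely $\{0\}$, $\{1,2,4,8\}$, $\{7,11,13,14\}$, $\{3,6,9,12\}$, $\{5,10\}$. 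Both $Z_3=\{0,5,10\}$ and $Z_5=\{0,3,6,9,12\}$ are $S$-subgroups, so $S$ is imprimitive and even admits complementary coprime $S$-subgroups; yet $S$ is not the dot product of its restrictions (that product has the single basic set $\{1,2,4,7,8,11,13,14\}$ where $S$ has two, and indeed $S$ has five basic sets, a prime number, so it cannot be any dot product), and $S$ admits no wedge decomposition either, since the basic set $\{1,2,4,8\}$ has cardinality $4$ and therefore cannot be a union of cosets of any nontrivial subgroup of $Z_{15}$. An imprimitive, non-dot, non-wedge S-ring must still be shown to be cyclotomic, and your argument provides no route to that conclusion, because the multiplier analysis you describe is confined to the primitive branch.

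Even where the intended cases are entered, the substantive content is deferred rather than supplied. In the dot branch, the example above shows that the existence of complementary coprime $S$-subgroups does not force $S=S_H\cdot S_K$, so the step ``one must verify that the basic sets are precisely the products $C_iD_j$'' is a verification that can fail, and you give no criterion for when it succeeds. In the wedge branch, proving that some pair $H\leq K$ satisfies the compatibility condition $\pi(S_K)=F(K/H)\cap S_{G/H}$ is itself one of the central theorems of Leung and Man, not a matter of choosing $H$ minimal and $K$ maximal among pairs already known to be compatible. The primitive case, finally, is explicitly conceded to the source; ironically it is the easiest branch, since by Schur's classical theorem a nontrivial primitive S-ring over a cyclic group of composite order does not exist, so primitivity forces $|G|$ prime, where the cyclotomic description is classical --- the genuinely hard work in Leung and Man lies in the imprimitive branches. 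Since the paper itself treats the statement as an external citation, the appropriate course here is to cite \cite{leung_man,leung_man2} rather than to reconstruct the proof.
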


Note that the constructions (I), (II), and (III) can be used to
produce S-rings over an arbitrary group $G$, but if $G$ is not
cyclic then it is not necessarily true that all S-rings can be
constructed using these methods. For instance, over elementary abelian $p$-groups we have the following (for proofs, see \cite[Theorem 8.4, Example 8.5]{kerby_masters}):

\begin{thm}Let $p$ be a prime with $p\geq 5$. Then there are S-rings
over $Z_p\times Z_p$ which cannot be constructed as in (I), (II), or (III).
\end{thm}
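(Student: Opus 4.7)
The plan is to construct, for each $p \geq 5$, a specific S-ring $S$ over $\Z_p \times \Z_p$ that is \emph{primitive} (has no proper nontrivial $S$-subgroup) and non-cyclotomic, thereby ruling out all three constructions at once. Both (II) and (III) automatically produce S-rings containing a proper nontrivial $S$-subgroup---namely $H$ in the dot product, and $K$ in the wedge product---so a primitive S-ring is neither a dot nor a wedge product. The only proper nontrivial subgroups of $\Z_p \times \Z_p$ are the $p+1$ one-dimensional $\F_p$-subspaces, so primitivity reduces to checking that no union of basic sets (other than $\{0\}$ and $G$ itself) coincides with one of these lines.

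For the construction, identify $G$ with $\F_p^2$ and seek a symmetric partition of $G \setminus \{0\}$ into sets $C_1,\dots,C_k$ closed under the $S$-ring operations. A natural approach is to begin with a cyclotomic S-ring arising from a small subgroup of $\F_{p^2}^\ast$ (acting as scalar multiplication after the identification $\F_{p^2} \cong \F_p^2$), and then modify the partition---for instance by splitting or merging orbits---in a way that preserves closure under ordinary multiplication but is incompatible with the orbit partition of any subgroup of $GL_2(\F_p)$. The hypothesis $p \geq 5$ enters here to ensure sufficient flexibility: for $p = 2, 3$ every such partition turns out to be cyclotomic, but for $p \geq 5$ there is room for genuinely combinatorial S-rings.

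Verification then proceeds in three steps: (a) confirm the S-ring axioms, which for a symmetric partition reduces to showing that each product $\q{C_i}\ \q{C_j}$ is a nonnegative integer combination of $1$ and the $\q{C_k}$; (b) check primitivity by verifying, for each of the $p+1$ lines $\ell$ through the origin, that $\q{\ell}$ is not a $\Z$-linear combination of $1$ and the $\q{C_i}$; and (c) rule out cyclotomic structure. Step (c) is the main obstacle: it requires a careful analysis of the subgroup lattice of $GL_2(\F_p)$, enumerating the conjugacy classes of subgroups (split and nonsplit tori, Borel subgroups, monomial subgroups, etc.) and showing that none has the prescribed basic sets as its orbit partition. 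Steps (a) and (b) amount to finite bookkeeping, while (c) carries the real group-theoretic content, and the bound $p \geq 5$ is precisely what guarantees the existence of such a non-cyclotomic obstruction.
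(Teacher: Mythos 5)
The paper does not actually prove this theorem in the text---it defers to \cite[Theorem 8.4, Example 8.5]{kerby_masters}---so there is no in-text argument to compare against; your proposal must therefore stand on its own, and it does not. The sound part is the reduction: an S-ring with no proper nontrivial $S$-subgroup cannot be a dot product (which has $H$ and $K$ as $S$-subgroups) or a wedge product (which has $K$ as one), exactly the observation the paper makes in its $Z_2^6$ example. The genuine gap is that you never exhibit an S-ring. Everything after ``identify $G$ with $\F_p^2$'' describes what one \emph{would} verify about a construction you do not supply, and the sentence ``for $p\geq 5$ there is room for genuinely combinatorial S-rings'' is precisely the statement to be proved, so invoking it is circular. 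Moreover, since the claim is about every prime $p\geq 5$, steps (a)--(c) cannot be dismissed as ``finite bookkeeping'': you need a \emph{uniform} family of partitions, a uniform closure verification, and a uniform argument that no subgroup of $GL_2(\F_p)$ realizes the partition as its orbit partition---not a prime-by-prime enumeration of tori and Borel subgroups.

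A concrete way to repair this, which also shows that insisting on primitivity is both unnecessary and an added burden: identify $G$ with $\F_p^2$ and let $L$ be the set of $p+1$ lines through the origin. For \emph{any} set partition $\m P$ of $L$, the sets $\{0\}$ together with $T_I=\bigcup_{\ell\in I}(\ell\setminus\{0\})$ for $I\in\m P$ form a Schur partition, since $\q{\ell}\,\q{\ell'}=\q G$ for distinct lines and $\q{\ell}^2=p\,\q{\ell}$, so all products lie in the span. Among these, one checks directly that the dot products are exactly the partitions of the form $\{\{H\},\{K\},L\setminus\{H,K\}\}$, the wedge products are exactly $\{\{\ell\},L\setminus\{\ell\}\}$, and the cyclotomic ones are orbit partitions of subgroups of $GL_2(\F_p)$ acting on $L$ (with the further constraint of transitivity on each $T_I$). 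The number of set partitions of $L$ is the Bell number $B_{p+1}$, which for $p\geq 5$ exceeds the number of partitions realizable in any of these three ways (for $p=5$ one compares $B_6=203$ with the number of subgroups of $PGL_2(5)\cong \Sym_5$; for larger $p$ the Bell number grows super-exponentially while the count of subgroups of $GL_2(\F_p)$ does not), so some line-partition S-ring escapes (I), (II), and (III). This is the kind of explicit, uniform argument your proposal is missing; without it, what you have is a plausible strategy rather than a proof.
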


\begin{thm}There are S-rings over $Z_3\times Z_3 \times Z_3 \times Z_3$ which cannot be constructed as in (I), (II), or (III).
\end{thm}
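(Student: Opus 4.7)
The plan is to exhibit an explicit S-ring $S$ over $G = Z_3 \times Z_3 \times Z_3 \times Z_3$ and verify directly that none of the three constructions (I), (II), (III) yields it. A convenient way to produce such an $S$ is as a Schurian S-ring: take a transitive permutation group $P \leq \Sym(G)$ containing the right regular representation of $G$, and let the basic sets of $S$ be the orbits of the point stabilizer $P_1$ of the identity on $G$. Two conditions on the choice of $P$ make the subsequent verification straightforward: (a) no proper nontrivial subgroup of $G$ is a union of $P_1$-orbits, and (b) $P_1$ is not contained in $\Aut(G) = GL(4, \F_3)$.

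Ruling out (II) and (III) follows uniformly from condition (a). If $S$ were a dot product $S_{H} \cdot S_{K}$ with $G = H \times K$ and both $H, K$ proper and nontrivial, then $H$ and $K$ would both appear as $S$-sets of $S$, being unions of basic sets of the form $C \cdot \{1\}$ and $\{1\} \cdot D$ respectively. And if $S$ were a wedge product $S_{K} \wedge S_{G/H}$ with $1 < H \leq K < G$, then $K$ would again be a proper nontrivial subgroup of $G$ that is an $S$-set, since it equals the union of the basic sets of $S_K$. Note that because $G$ is abelian, the normality requirement in (III) imposes no restriction on the allowable pairs $(H, K)$, so condition (a) genuinely handles both constructions.

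Ruling out (I) requires showing that no subgroup of $\Aut(G)$ has the basic sets of $S$ as its orbit partition. I would compute the setwise stabilizer $\Aut(G)_S = \{\phi \in GL(4, \F_3) : \phi(T_i) = T_i \text{ for each basic set } T_i \text{ of } S\}$ and verify that its orbits on $G$ strictly refine the basic sets, equivalently that $\Aut(G)_S$ fails to act transitively on at least one basic set. Since $P_1$ does act transitively on each basic set by construction, but by condition (b) is not contained in $\Aut(G)$, one expects $\Aut(G)_S$ to be too small to reproduce the orbit structure of $P_1$; this is confirmed by direct computation once the basic sets have been exhibited.

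The main obstacle is producing the permutation group $P$ meeting (a) and (b): a transitive group of degree $81$ containing $G$ regularly, whose point stabilizer lies outside $GL(4, \F_3)$ yet acts on $G$ without fixing any proper nontrivial subgroup setwise. Such groups can be built via twisted or generalized wreath product constructions on $81$ points, or via direct combinatorial assembly of a partition of $G \setminus \{1\}$ whose linear span happens to be closed under the Hadamard product (Theorem \ref{folklore_cor}); one then verifies closure by direct calculation and checks (a) and (b) by inspection of the chosen basic sets. This is presumably the route taken in the explicit example referenced in the author's master's thesis.
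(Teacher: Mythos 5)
There is a genuine gap: your argument is a verification \emph{schema}, not a proof. The entire content of the theorem is an existence claim, and you never exhibit the S-ring. Everything you write is conditional on having found a Schur partition of $Z_3^4$ satisfying your conditions (a) and (b), and the final paragraph concedes that producing it is ``the main obstacle,'' offering only vague suggestions (twisted wreath products, combinatorial assembly) without carrying any of them out. The paper itself gives no proof of this theorem either --- it defers to the master's thesis --- but its proof of the analogous statement for $Z_2^6$ shows what is actually required: an explicit partition (there, a fusion of the nine orbits of $\langle\omega^9\rangle$ on $\F_{64}^\times$ into two classes plus the identity), a machine verification that the span is closed under the Hadamard product, and a machine check that no subgroup of the automorphism group reproduces the partition as its orbits. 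Without the concrete basic sets, none of your three verification steps can be performed.

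A second, logical, problem sits in your treatment of construction (I). Your condition (b), that the point stabilizer $P_1$ is not contained in $\Aut(G)=GL(4,\F_3)$, does \emph{not} imply that the basic sets fail to be the orbits of some subgroup $\Omega\leq GL(4,\F_3)$: an entirely unrelated linear group could induce the same orbit partition. You acknowledge this by saying the claim ``is confirmed by direct computation once the basic sets have been exhibited,'' but that again presupposes the missing example, and the intervening sentence (``one expects $\Aut(G)_S$ to be too small'') is heuristic, not deductive. By contrast, your reduction of (II) and (III) to condition (a) is correct and clean: in either case a proper nontrivial subgroup ($H$ or $K$ respectively) would be a union of basic sets, so primitivity of $S$ rules both out, and the normality hypothesis in (III) is indeed vacuous over an abelian group. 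So the skeleton is sound; what is missing is precisely the object whose existence the theorem asserts.
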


In contrast, all S-rings over $Z_3 \times Z_3$ and $Z_3 \times Z_3 \times Z_3$ can be constructed as in (I). In \cite[Question 8.6]{kerby_masters}, the question was asked of whether all S-rings over an elementary abelian 2-group can be constructed as in (I). We can now give a negative answer to this:

\begin{thm}There are S-rings over $Z_2^6$ which cannot be constructed as in (I), (II), or (III).
\end{thm}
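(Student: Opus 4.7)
The plan is to exhibit an explicit S-ring $S$ over $G=Z_2^6$ that is simultaneously \emph{primitive} (its only $S$-subgroups are $\{1\}$ and $G$) and \emph{non-cyclotomic} (its basic sets are not the orbits of any subgroup of $\Aut(G)=GL_6(\F_2)$). Primitivity rules out constructions (II) and (III) at once: a nontrivial dot product $S_H\cdot S_K$ requires $G=H\times K$ with both $H$ and $K$ proper $S$-subgroups, and a wedge product $S_K\wedge S_{G/H}$ requires a chain $1<H\le K<G$ of proper $S$-subgroups. Non-cyclotomicity is the direct negation of~(I).

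To produce the example, I would search among the translation association schemes of order $64$ whose thin radical is the regular representation of $Z_2^6$ (equivalently, S-rings over $\F_2^6$) for a primitive non-Schurian one. Existing catalogues of small association schemes provide plausible candidates. Alternatively, one can try to construct an example by hand: start with a cyclotomic S-ring arising from a sizable primitive subgroup of $GL_6(\F_2)$ (for instance, one induced by $\Sym(8)$ acting on the heart of its natural permutation module over $\F_2$), and look for a proper coarsening of its basic-set partition that is still closed under the Hadamard product and $^{(-1)}$, but is preserved by no larger subgroup of $GL_6(\F_2)$.

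Once a candidate $S$ is in hand, both properties are verified by finite computation. For primitivity, one lists all sums of basic sets $\q{T_{i_1}}+\cdots+\q{T_{i_k}}$ and confirms that none of them, apart from $\q{\{1\}}$ and $\q G$, has the form $\q H$ for a subgroup $H\le G$. For non-cyclotomicity, one computes the multiplier group $\Omega^\ast=\{\varphi\in GL_6(\F_2) : \varphi(T_i)=T_i \text{ for every basic set } T_i\}$, and then exhibits at least one basic set $T$ that is a \emph{proper} union of $\Omega^\ast$-orbits on $G$.

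The hard part is producing the example itself. The subgroup lattice of $GL_6(\F_2)$ is extremely rich, so many natural partitions of $\F_2^6$ --- those coming from cosets of a subspace, from weights with respect to some basis, from the action of geometric subgroups, and so on --- turn out to be cyclotomic after all. Moreover, to ensure that $S$ is also primitive, the coarsening must be chosen so that no proper nonempty union of basic sets (together with $\{1\}$) happens to be a subgroup. For these reasons I would expect the proof ultimately to rely on computer enumeration of S-rings of order $64$, with the mechanical verification described above then carried out on a single carefully chosen example.
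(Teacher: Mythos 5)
Your reduction is exactly the one the paper uses: a primitive S-ring cannot be a dot product or a wedge product (both force a nontrivial proper $S$-subgroup), so it suffices to exhibit a primitive S-ring over $Z_2^6$ whose basic sets are not the orbits of any subgroup of $GL_6(\F_2)$, with the cyclotomic check done by machine. That part of your argument is sound. The gap is that for an existence statement of this kind the witness \emph{is} the proof, and your proposal stops at the point of saying one should be found by searching; nothing in the write-up establishes that such an S-ring actually exists, so as it stands the theorem is not proved.

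For the record, the paper's witness is precisely of the shape you predicted (a coarsening of a cyclotomic S-ring): identify $Z_2^6$ with the additive group of $\F_{64}$, let $\omega$ generate $\F_{64}^\times$, and let $C_0,\dots,C_8$ be the nine orbits of size $7$ of multiplication by $\langle \omega^9\rangle$ on $\F_{64}^\times$. The partition $\{0\}$, $C_0\cup\cdots\cup C_4$, $C_5\cup\cdots\cup C_8$ is a Schur partition (verified in MAGMA), and the resulting S-ring is shown not to arise from (I) by the same computation. Note that primitivity here does not even require your proposed enumeration of unions of basic sets: the nontrivial basic sets have sizes $35$ and $28$, and since every subgroup of $Z_2^6$ has $2$-power order while $1+35=36$ and $1+28=29$ are not powers of $2$, no proper nontrivial union containing the identity can be a subgroup. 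So the only genuinely computational step is the non-cyclotomicity check, for which your plan (compute the setwise stabilizer $\Omega^\ast$ of the partition in $GL_6(\F_2)$ and observe that some basic set is a proper union of its orbits) is the right one. To complete your proof you must either import a concrete example such as this one or carry out the search you describe and report its outcome.
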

\begin{proof}
Consider $G=Z_2^6$ as the additive group of the finite field $F_{64}$. Since $x^6+x^4+x^3+x+1$ is primitive over $\mathbb{F}_2$, $F_{64}^\times$ has a generator $\omega$ with $\omega^6+\omega^4+\omega^3+\omega+1=0$. The action of multiplication by $\langle \omega^9 \rangle$ partitions $F_{64}^\times$ into 9 orbits (each of size 7),
$$C_i=\{\omega^{i+9j} : j \in \{0..6\}\},\quad i=0,\dots,8.$$
We have found using MAGMA \cite{magma} that
$$\{0\},C_0\cup C_1\cup C_2\cup C_3\cup C_4, C_5\cup C_6\cup C_7\cup C_8$$
forms a Schur partition of $G$ whose corresponding S-ring $S$ cannot be constructed as in (I), (II), or (III) (For (II) and (III), this is fairly obvious: since basic sets have size 35, 28, and 1, it is clear that $S$ has no nontrivial proper $S$-subgroup, i.e., $S$ is a \emph{primitive} S-ring.)
\end{proof}
\begin{remark}
An exhaustive enumeration using MAGMA shows that all S-rings over $Z_2^n$ for $n \leq 4$ can be constructed as in (I). The question in the case $n=5$ remains open; this remaining case would probably still be feasible to answer by exhaustive enumeration, if one reduced the computation by taking advantage of the symmetry of the group.
\end{remark}

We now turn to the main result of this section:

\begin{thm}\label{autcyc}
If $S$ is an S-ring over a cyclic group $Z_n$, then $\Aut(S)$ is abelian.
\end{thm}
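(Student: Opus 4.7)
The plan is to proceed by strong induction on $n$, splitting the argument according to the Leung--Man classification into the three cases (I), (II), (III). For cases (II) and (III), the key observation is that subgroups of a cyclic group are uniquely determined by their order; this should force every S-ring automorphism to preserve the distinguished subgroups appearing in the decomposition, reducing the problem to automorphism groups of smaller S-rings where the inductive hypothesis applies. The cyclotomic case (I) forms the true base of the induction and will require Muzychuk's theorem.

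Concretely, for a dot product $S = S_H \cdot S_K$ with $Z_n = H \times K$, the subgroups $H$ and $K$ are the unique subgroups of their respective orders in the cyclic group $Z_n$, so Theorem~\ref{isosimp}(ii) forces any $\phi \in \Aut(S)$ to satisfy $\phi(H) = H$ and $\phi(K) = K$. Restriction then yields automorphisms $\phi_H \in \Aut(S_H)$ and $\phi_K \in \Aut(S_K)$, and since the basic sets of $S$ are precisely the products $C_i D_j$ (with $C_i$ a basic set of $S_H$ and $D_j$ of $S_K$), the pair $(\phi_H, \phi_K)$ should determine $\phi$, giving an embedding $\Aut(S) \hookrightarrow \Aut(S_H) \times \Aut(S_K)$, which is abelian by induction. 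For a wedge product $S = S_K \wedge S_{G/H}$, an analogous argument should give $\Aut(S) \hookrightarrow \Aut(S_K) \times \Aut(S_{G/H})$: again $\phi$ preserves $H$ and $K$ by uniqueness, and is determined by its restriction to $S_K$ together with the induced automorphism $\bar\phi$ of the quotient S-ring $S_{G/H}$, since the basic sets of $S$ are either basic sets of $S_K$ contained in $K$ or preimages under $\pi$ of basic sets of $S_{G/H}$ not contained in $K/H$.

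For the cyclotomic case, the plan is to invoke Muzychuk's theorem \cite{muzychuk2} in its strong form: every algebraic isomorphism between S-rings over cyclic groups is induced by a group automorphism of the underlying cyclic group. Applying this to automorphisms of $S$ yields a surjective homomorphism from the stabilizer $\{\alpha \in \Aut(Z_n) : \alpha^*(S) = S\}$ onto $\Aut(S)$, where $\alpha^*$ denotes the extension of $\alpha$ to an algebra automorphism of $FZ_n$. Since $\Aut(Z_n) \cong (\Z/n\Z)^\times$ is abelian, it follows that $\Aut(S)$, being a subquotient of an abelian group, is itself abelian.

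The main obstacle is the cyclotomic case, where the inductive machinery is unavailable and we must appeal to Muzychuk's deep theorem identifying algebraic and Cayley isomorphisms of cyclic S-rings. The characteristic-zero assumption enters essentially through Muzychuk's theorem; indeed, as the excerpt notes, the conclusion of the present theorem is known to fail in positive characteristic, which is consistent with the failure of Muzychuk's theorem in that setting.
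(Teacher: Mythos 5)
Your treatment of the cyclotomic case rests on a false statement. The ``strong form'' of Muzychuk's theorem you invoke --- that every algebraic isomorphism of S-rings over cyclic groups is induced by a group automorphism of $Z_n$ --- is not what Muzychuk proved, and it is not true. Theorem \ref{cyciso} says only that isomorphic S-rings over $Z_n$ coincide; the relevant structural result (\cite[Theorem~1.1']{muzychuk2}, appearing here as Lemma \ref{localstr} in the cyclotomic case) is \emph{local}: for each basic set $T$ there is an exponent $m$ with $\phi(T)=T^{(m)}$, but $m$ may depend on $T$, and in general the local exponents cannot be glued into a single unit mod $n$. Concretely, take $G=Z_9$ and $\Omega=\{1,4,7\}\leq\Aut(Z_9)$; the resulting cyclotomic S-ring has basic sets $\{0\},\{3\},\{6\},\{1,4,7\},\{2,5,8\}$, and the map fixing $\{1,4,7\}$ and $\{2,5,8\}$ while swapping $\{3\}$ and $\{6\}$ is an S-ring automorphism (a direct check of the multiplication table), yet any multiplication map fixing $\{1,4,7\}$ is multiplication by some $m\equiv 1\pmod 3$ and hence fixes $\{3\}$. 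So $\Aut(S)$ can be strictly larger than the group of Cayley automorphisms, and your argument that it is a subquotient of $(\Z/n\Z)^\times$ collapses. (A warning sign: if your strong form were true, the whole theorem would follow in one line with no case analysis at all.) The paper instead uses the elementary Lemmas \ref{lemwie}, \ref{autexp}, and \ref{localstr} to show that $\Aut(S)$ is a subdirect product of the groups $A_d$ obtained by restricting to the basic sets of elements of order $d$, and that each $A_d$ acts by a single exponent and is therefore abelian; abelianness survives even though the exponents for different $d$ need not be compatible. Your cyclotomic case needs to be repaired along these lines.

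The remainder of your outline --- induction through the Leung--Man trichotomy, with uniqueness of subgroups of a given order forcing $\phi(\q H)=\q H$ and $\phi(\q K)=\q K$, and embeddings of $\Aut(S)$ into $\Aut(S_H)\times\Aut(S_K)$, respectively $\Aut(S_K)\times\Aut(S_{G/H})$ --- is essentially the paper's argument. The one step you still owe in the wedge case is that the induced map on $S_{G/H}$ is a well-defined S-ring automorphism; the paper verifies this by showing that $\phi$ carries unions of $H$-cosets to unions of $H$-cosets (using $\phi(\q C)\q H=|H|\phi(\q C)$) and then checking that $\phi^*=\frac1{|H|}\pi\phi\pi'$ is bijective and multiplicative. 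That verification is routine, but it is not automatic and should appear in a complete write-up.
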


Before proving this, we need a few elementary lemmas, proofs of which may be found in \cite[Lemmas 8.8, 8.9, 8.10]{kerby_masters}. A version of
Lemma \ref{lemwie} can be found in \cite[Theorem 23.9]{wielandt},
while more general versions of Lemmas \ref{autexp} and
\ref{localstr} can be found in \cite[Propositions 3.1, 3.4]{muzychuk2}.
\begin{lemma}[Wielandt]\label{lemwie}
Suppose $S$ is an S-ring over an abelian group $G$ and $m$ is an
integer relatively prime to $|G|$. If $\q C \in S$, then also $\q
C^{(m)}\in S$. Moreover, if $\q C$ is a basic element of $S$ then
$\q C^{(m)}$ is also a basic element of $S$.
\end{lemma}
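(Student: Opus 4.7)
My plan is to prove the Schur--Wielandt Principle as the main tool: if $x = \sum_g a_g g \in S$, then for each value $\lambda$ the simple quantity $\sum_{g : a_g = \lambda} g$ again lies in $S$. This follows because the Hadamard powers $x^{\circ k} = \sum_g a_g^k g$ all lie in $S$ (by closure under $\circ$), so for any polynomial $P \in F[t]$ we get $P^\circ(x) := \sum_k P_k x^{\circ k} = \sum_g P(a_g) g \in S$, using $x^{\circ 0} := \q G \in S$ for the constant term; one then picks $P$ by Lagrange interpolation to take the value $1$ on $\lambda$ and $0$ on the other values of the $a_g$. Two reductions follow at once: writing $x = \sum_i c_i \q{T_i}$ in the basic-quantity basis, the identity $x^{(m)} = \sum_i c_i \q{T_i^{(m)}}$ reduces the first claim of the lemma to proving $\q{C^{(m)}} \in S$ for every $S$-set $C$; and since $(x^{(a)})^{(b)} = x^{(ab)}$, it further suffices to take $m = p$ a prime with $p \nmid |G|$.

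For the main computation, I would prove the congruence
\begin{equation*}
\q C^{\,p} \;\equiv\; \q{C^{(p)}} \pmod{p\,\Z G}
\end{equation*}
in the integral group ring by the usual ``freshman's dream'': expand $\q C^{\,p} = \sum_{(g_1,\dots,g_p) \in C^p} g_1 \cdots g_p$ and, using that $G$ is abelian, collect tuples according to their underlying multiset. The diagonal multiset $\{g,g,\dots,g\}$ contributes $g^p$ with coefficient $1$, while every other multiset contributes a multinomial coefficient $\binom{p}{k_1,k_2,\dots}$ with each $k_i < p$, hence divisible by $p$. Thus $\q C^{\,p} \equiv \sum_{g \in C} g^p \pmod{p}$, and because $\gcd(p,|G|)=1$ the map $g \mapsto g^p$ is a bijection of $G$, making the right-hand side exactly $\q{C^{(p)}}$. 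Now $\q C^{\,p}$ lies in $S$; writing it as $\sum_g b_g g$ with $b_g \in \Z_{\geq 0}$, the congruence forces every $b_g$ to be $\equiv 0$ or $\equiv 1 \pmod p$, with $b_g \equiv 1 \pmod p$ precisely when $g \in C^{(p)}$. A second application of the Schur--Wielandt Principle to $\q C^{\,p}$, summing its coefficient level sets over values $\equiv 1 \pmod p$, then extracts $\q{C^{(p)}} \in S$.

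For the ``moreover'' statement, suppose $C$ is basic and let $C^{(p)} = T_{j_1} \sqcup \cdots \sqcup T_{j_r}$ be its disjoint decomposition into basic sets of $S$. Choosing $m$ to be the inverse of $p$ modulo $|G|$ (so that $g^{pm} = g$ for all $g \in G$ by Lagrange), the first part of the lemma applied to each $\q{T_{j_s}}$ shows that $T_{j_s}^{(m)}$ is an $S$-set, and these $S$-sets partition $C$. The basicness of $C$ then forces $r = 1$, so $C^{(p)}$ is itself basic. The main obstacle I anticipate is conceptual rather than computational: the central congruence lives in $\Z G$, not in $S$, and one cannot simply divide $\q C^{\,p} - \q{C^{(p)}}$ by $p$ to isolate $\q{C^{(p)}}$ inside $S$, since the quotient need not lie there. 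Schur--Wielandt is precisely the device that keeps the argument inside $S$, by replacing the forbidden ``subtract-and-divide'' step with polynomial manipulation in the Hadamard product, under which $S$ is closed by hypothesis.
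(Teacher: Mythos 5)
Your proof is correct. The paper gives no in-text proof of this lemma --- it defers to Wielandt's book (Theorem 23.9 there) and to the author's thesis --- but your argument (the Schur--Wielandt principle via Lagrange interpolation in the Hadamard product, the reduction to prime exponents, the ``freshman's dream'' congruence $\q C^{\,p}\equiv\q{C^{(p)}}\pmod{p\,\Z G}$ using commutativity, and the partition argument that forces $r=1$ for basic sets) is precisely the classical proof found in those sources, and it correctly uses the characteristic-zero hypothesis in force in that section of the paper.
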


\begin{lemma}\label{autexp}
Suppose $S$ is an S-ring over an abelian group $G$ and
$\phi\in\Aut(S)$. Then for any simple quantity $\q C\in S$ and any
integer $m$ relatively prime to $|G|$, we have
$\phi(C^{(m)})=\phi(C)^{(m)}$.
\end{lemma}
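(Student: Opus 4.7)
My plan is to reduce to the case of prime $m$ and then exploit a Frobenius-style congruence in $\mathbb{Z}G$. Since $(C^{(m_1)})^{(m_2)} = C^{(m_1 m_2)}$ and any $m$ coprime to $|G|$ factors into primes coprime to $|G|$, it suffices to prove the statement when $m = p$ is a prime not dividing $|G|$.

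Next I set up an integral form of $S$. Let $S_\Z$ denote the $\Z$-span of the basic quantities of $S$ inside $\Z G$. The ordinary-product structure constants in the basic-quantity basis are non-negative integers (they count element representations in $G$), and the Hadamard-product structure constants are $0$ or $1$ by the disjointness of the basic sets; thus $S_\Z$ is a $\Z$-subring of $\Z G$ closed under both operations, and $\phi$, which permutes the basic quantities, restricts to a $\Z$-algebra automorphism of $S_\Z$.

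The heart of the argument is the classical freshman's-dream identity in $\Z G$: expanding $\q C^p$ via the multinomial theorem and using that $G$ is abelian, every multinomial coefficient for a ``non-diagonal'' term is divisible by $p$, so
$$\q C^p \;\equiv\; \q{C^{(p)}} \pmod{p\,\Z G}.$$
Set $y := p^{-1}(\q C^p - \q{C^{(p)}}) \in \Z G$. Since $\q C^p \in S_\Z$ and $\q{C^{(p)}} \in S_\Z$ (by Lemma \ref{lemwie}), we have $p y \in S_\Z$; writing $p y = \sum_i b_i \q{T_i}$, the disjointness of the basic sets forces each $b_i/p$ (the coefficient of any $g \in T_i$ in $y$) to be an integer, so in fact $y \in S_\Z$.

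Applying $\phi$ then gives
$$\q{\phi(C)}^p - \q{\phi(C^{(p)})} \;=\; p\,\phi(y) \;\in\; p\,\Z G,$$
while the same freshman's dream applied to the S-set $\phi(C)$ (which is an S-set by Theorem \ref{isosimp}) yields
$$\q{\phi(C)}^p - \q{\phi(C)^{(p)}} \;\in\; p\,\Z G.$$
Subtracting gives $\q{\phi(C)^{(p)}} - \q{\phi(C^{(p)})} \in p\,\Z G$. The finishing observation, and the step where the hypothesis $p \geq 2$ matters, is that the left-hand side is a difference of two $\{0,1\}$-vectors in $\Z G$, so all its entries lie in $\{-1,0,1\}$; divisibility by $p$ then forces it to vanish, yielding $\phi(C^{(p)}) = \phi(C)^{(p)}$. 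The most delicate step is the passage from $py \in S_\Z$ to $y \in S_\Z$; this is the main place where the integral structure does real work, but it follows cleanly from the disjointness of the basic sets.
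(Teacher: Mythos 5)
Your argument is correct, and it is essentially the standard one: the paper itself does not print a proof of Lemma \ref{autexp} (it defers to \cite[Lemma 8.9]{kerby_masters} and \cite[Proposition 3.1]{muzychuk2}), but those proofs rest on exactly the same Frobenius congruence $\q C^{\,p}\equiv \q{C^{(p)}} \pmod{p\,\Z G}$ combined with the fact that an algebraic isomorphism permutes the basic quantities and hence preserves the integral form $S_{\Z}$. The only point you gloss over is the reduction to positive $m$ (replace $m$ by a positive representative modulo $\exp(G)$ before factoring into primes), which is routine.
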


\begin{lemma}\label{localstr}
Suppose $S$ is a cyclotomic S-ring over $Z_n$. Let $\phi$ be an
automorphism of $S$. Then for any basic set $T$ of $S$, there is an
integer $m$ relatively prime to $n$ such that $\phi(T)=T^{(m)}$.
\end{lemma}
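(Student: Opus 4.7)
The plan is to use the concrete description of basic sets in a cyclotomic S-ring as $\Omega$-orbits, together with Theorem \ref{isosimp}, to pin down the shape of $\phi(T)$ sharply enough that the exponent $m$ can be read off directly.

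First I would identify $\Omega \leq \Aut(Z_n)$ with a subgroup $K$ of $(\Z/n\Z)^*$, so that every basic set of $S$ has the form $g^K := \{g^k : k \in K\}$ for some $g \in Z_n$. Fix a basic set $T = g^K$. Since each $k \in K$ is coprime to $n$, and hence to the order of $g$, all elements of $T$ share a common order $d \mid n$, and $\langle T \rangle = H_d$, the unique subgroup of $Z_n$ of order $d$. The next observation is that every subgroup of $Z_n$ is characteristic, hence $\Omega$-invariant, hence a union of basic sets, so every subgroup of $Z_n$ is an $S$-subgroup. In particular $H_d$ is an $S$-subgroup, so by Theorem \ref{isosimp} the image $\phi(H_d)$ is an $S$-subgroup with $|\phi(H_d)| = d$; since $Z_n$ has a unique subgroup of each order, $\phi(H_d) = H_d$, and therefore $\phi(T) \subseteq H_d$.

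Next I would show that $\phi(T)$ actually consists of generators of $H_d$. Since $\phi(T)$ is a basic set, it is a single $\Omega$-orbit, and its elements share some common order $d'$; the inclusion $\phi(T) \subseteq H_d$ gives $d' \mid d$. Applying the identical argument to $\phi^{-1}$ and $\phi(T)$ in place of $\phi$ and $T$ yields $d \mid d'$, so $d' = d$. Consequently $\phi(T) = (g^{m'})^K$ for some $m'$ coprime to $d$. Using the standard surjectivity of the reduction map $(\Z/n\Z)^* \twoheadrightarrow (\Z/d\Z)^*$, I lift $m'$ to an integer $m$ coprime to $n$ with $m \equiv m' \pmod d$. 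Then, since $g$ has order $d$,
\[
T^{(m)} = \{(g^k)^m : k \in K\} = \{g^{km'} : k \in K\} = (g^{m'})^K = \phi(T),
\]
completing the proof.

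The main obstacle, such as it is, lies in the step showing $d = d'$ --- i.e., that $\phi$ cannot collapse $T$ into a smaller subgroup; everything else rests on the already-stated Theorem \ref{isosimp} and elementary facts about cyclic groups. The symmetric $\phi/\phi^{-1}$ trick handles this cleanly, and the lifting of $m'$ to $m$ is a routine elementary number-theoretic remark. Once the reduction $\phi(H_d) = H_d$ and the order-preservation $d' = d$ are in place, the identification $\phi(T) = T^{(m)}$ is essentially a tautology about $\Omega$-orbits of generators parameterized by cosets of $K$ in $(\Z/n\Z)^*$.
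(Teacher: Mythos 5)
Your proof is correct, and all steps check out. One caveat on the comparison: the paper does not actually prove Lemma \ref{localstr} in the text---it defers the proofs of Lemmas \ref{lemwie}, \ref{autexp}, and \ref{localstr} to the author's master's thesis---so there is no in-paper argument to set against yours; your route (identify $\Omega$ with $K \leq (\Z/n\Z)^*$, use Theorem \ref{isosimp} together with the uniqueness of subgroups of each order in $Z_n$ to get $\phi(H_d)=H_d$, use the $\phi$/$\phi^{-1}$ symmetry to rule out a drop in element order, then lift the exponent via surjectivity of $(\Z/n\Z)^* \to (\Z/d\Z)^*$) is the natural and standard one for this lemma. The only step you assert without justification is that $\phi(H_d)=H_d$ and $T \subseteq H_d$ together force $\phi(T) \subseteq H_d$: this does follow, but it deserves a line. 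Since $H_d$ is a union of basic sets, $\q{H_d}$ is the sum of the basic quantities $\q{T_i}$ with $T_i \subseteq H_d$; applying $\phi$, which permutes basic quantities (and is injective on them by linear independence), expresses $\q{H_d}=\phi(\q{H_d})$ as a sum of distinct, hence disjoint, basic quantities, one of which is $\q{\phi(T)}$, so $\phi(T) \subseteq H_d$. With that sentence added, the argument is complete; the concluding lifting step and the orbit computation $T^{(m)}=(g^{m'})^K=\phi(T)$ are exactly right.
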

\begin{remark}
Lemma \ref{lemwie} says that over an abelian group, every Cayley isomorphism is actually an automorphism of the S-ring. Lemma \ref{autexp} says that the Cayley automorphisms are in the center of the automorphism group of the S-ring. Finally, Lemma \ref{localstr} says that for a cyclotomic S-ring over a cyclic group, every S-ring automorphism
``locally" behaves like a Cayley automorphism. (In Lemma \ref{localstr}, the assumption that the S-ring be cyclotomic is actually unnecessary. However, the generalization omitting this hypothesis is a much deeper result which we will not need to use directly; it may be found in \cite[Theorem 1.1']{muzychuk2}, where it plays a key role in the proof of Muzychuk's result cited in Theorem \ref{cyciso}.)
\end{remark}

\begin{proof}[Proof of Theorem \ref{autcyc}]
If $S$ is a trivial S-ring, then $\Aut(S)$ is also trivial, hence
abelian, and we are done. So first consider the case that $S$ is
cyclotomic. Each basic set of $S$ then consists of elements of the
same order. Let $\m T_d$ be the collection of basic sets of $S$
containing elements of order $d$. We can consider $\Aut(S)$ as a
permutation group acting on the basic sets of $S$, and by Lemma
\ref{localstr}, for each $d$, $\Aut(S)$ permutes the basic sets of
$\m T_d$ among themselves. If we let $A_d$ denote the restriction of
$\Aut(S)$ to $\m T_d$, then $\Aut(S)$ is a subdirect product of all
the $A_d$'s, so it suffices to show that each $A_d$ is abelian. Fix
a divisor $d$ of $n$, and let $T$ be a basic set in $\m T_d$. For
any $k$ relatively prime to $n$, $T^{(k)}$ is another basic set of
$\m T_d$ by Lemma \ref{lemwie}, and every basic
set of $\m T_d$ has this form, for if $T'$ is any basic set in $\m
T_d$, there is an integer $l$ relatively prime to $n$ such that
$T^{(l)}\cap T' \neq \emptyset$, hence $T^{(l)}=T'$. Now, by Lemma
\ref{localstr}, $\phi(T)=T^{(m)}$ for some $m$ relatively prime to
$n$. It follows by Lemma \ref{autexp} that for any basic set
$T^{(k)} \in \m T_d$,
$$\phi(T^{(k)})=\phi(T)^{(k)}=(T^{(m)})^{(k)}=(T^{(k)})^{(m)}.$$
Thus $\phi(T')=(T')^{(m)}$ for any basic set $T' \in \m T_d$. From
this it is evident that $A_d$ is abelian.

Now suppose $S$ is a dot product $S_H\cdot S_K$. By induction we may
assume $\Aut(S_H)$ and $\Aut(S_K)$ are abelian. Let $\phi$ be an
element of $\Aut(S)$. Since $H$ is the unique subgroup of order $|H|$ in $G$,  Theorem \ref{isosimp}(i,ii) implies $\phi(\q H)=\q H$, and similarly $\phi(\q K)=\q K$. So $\phi(S_H)$ is an S-ring over $H$ which
is isomorphic to $S_H$. By the result of Muzychuk cited in Theorem
\ref{cyciso}, the only such S-ring is $S_H$ itself, so we must have
$\phi(S_H)=S_H$ and likewise $\phi(S_K)=S_K$. So $\phi|_{S_H} \in
\Aut(S_H)$ and $\phi|_{ S_K}\in\Aut(S_K)$. Given another
automorphism $\psi \in\Aut(S)$, and any basic set $CD$ of $S$, where
$C$ is a basic set of $S_H$ and $D$ is a basic set of $S_K$, we have
$\phi(\psi(\q C))=\psi(\phi(\q C))$ and $\phi(\psi(\q
D))=\psi(\phi(\q D))$ since $\Aut(S_H)$ and $\Aut(S_K)$ are abelian,
hence \begin{align*} \phi(\psi(\q{CD}))&=\phi(\psi(\q C\ \q
D))=\phi(\psi(\q C)\psi(\q D))=\phi(\psi(\q C))\phi(\psi(\q D))\\
&=\psi(\phi(\q C))\psi(\phi(\q D))=\psi(\phi(\q C)\phi(\q
D))=\psi(\phi(\q C\ \q D))=\psi(\phi(\q{CD}))
\end{align*}
which proves that $\phi$ and $\psi$ commute, so $\Aut(S)$ is
abelian.

Finally suppose $S$ is a wedge product $S_K \wedge S_{G/H}$. As above,
given an automorphism $\phi \in \Aut(S)$, we have $\phi|_{S_K}
\in\Aut(S_K)$. Now define $F$-linear maps $\pi : FG \to F(G/H)$
and $\pi': F(G/H) \to FG$ by
\begin{align*}
&\pi: g \mapsto gH \\
&\pi': gH \mapsto g\q H.
\end{align*}
The following relations are easily checked:
\begin{align*}\label{pipi}
\pi(\pi'(x)) &= |H|x \\
\pi(xy) &= \pi(x)\pi(y)\\
\pi'(xy) &= \frac1{|H|}\pi'(x)\pi'(y)
\end{align*}
Consider the linear map $\phi^* : S_{G/H} \to S_{G/H}$ given by the composite function $\phi^* =\frac1{|H|}\pi\phi\pi'$. We need to justify that this is well-defined. 
Note that if $B$ is a basic set of $S_{G/H}$ then $\pi'(\q B)=\q C$ where $C$ is a basic set of $S_G$ and $C$ is a union of cosets of $H$, so that $\q C\ \q H=|H|\q C$. Applying $\phi$ to boths sides gives $\phi(\q C)\q H=|H|\phi(\q C)$, so that the basic set $D=\phi(C)$ of $S_G$ is also union of cosets of $H$. Then $\pi(\q D)=|H|\q E$ for a basic set $E$ of $S_{G/H}$. Putting this together, we have
$$\phi^*(\q B)=\frac1{|H|}\pi(\phi(\pi'(\q B)))=\frac1{|H|}\pi(\phi(\q C))=\frac1{|H|}\pi(\q D)=\q E,$$
so that $\phi^*$ is well-defined and maps basic quantities to basic quantities. Now, we have
\begin{align*}
\left(\frac1{|H|}\pi'\phi^{-1}\pi\right)\phi^* &= \frac1{|H|^2}\pi'\phi^{-1}\pi\pi'\phi\pi 
= \frac1{|H|}\pi'\phi^{-1}\phi\pi=\frac1{|H|}\pi'\pi=1_{FG},
\end{align*}
so $\phi^*$ is a bijection. Finally,
\begin{align*}
\phi^*(xy)&=\frac1{|H|}(\pi\phi\pi')(xy)=\frac1{|H|^2}\pi(\phi(\pi'(x)\pi'(y)))\\
&=\frac1{|H|^2}\pi(\phi(\pi'(x)))\pi(\phi(\pi'(y)))=\phi^*(x)\phi^*(y)
\end{align*}
so $\phi^*$ is an S-ring automorphism of $S_{G/H}$.

By induction we may assume
$\Aut(S_K)$ and $\Aut(S_{G/H})$ are abelian. Then, given another
automorphism $\psi\in\Aut(S)$ and a basic set $C$ of $S$, to complete the proof, it suffices to show $\phi(\psi(\q C))=\psi(\phi(\q C))$. We have
two cases: If $C\subseteq K$, then
$$\phi(\psi(\q C))=\phi|_{S_K}(\psi|_{S_K}(\q C))=\psi|_{S_K}(\phi|_{S_K}(\q C))=\psi(\phi(\q C))$$
and we are done. Suppose instead that $C\nsubseteq K$. Then $C$ is a
union of cosets of $H$, hence $\pi'(\pi(\q C))=|H|\q C$. As above,
$\phi(C)$ is also a union of cosets of $H$. Similarly, so are $\psi(C)$, $\phi(\psi(C))$, and $\psi(\phi(C))$. It follows that
\begin{align*}
(\pi'\pi\phi)(\q C) &= \pi'(\pi(\q{\phi(C)})) =
|H|\q{\phi(C)}=|H|\phi(\q C)
\end{align*}
and likewise
\begin{align*}
(\pi'\pi\psi)(\q C) &= |H|\psi(\q C) \\
(\pi'\pi\phi\psi)(\q C) &= |H|(\phi\psi)(\q C) \\
(\pi'\pi\psi\phi)(\q C) &= |H|(\psi\phi)(\q C)
\end{align*}
From all of this it follows that
\begin{align*}(\phi\psi)(\q C)&=\frac1{|H|}(\pi'  \pi\phi\psi)(\q C)
=\frac1{|H|^2}(\pi'  \pi\phi\pi'\pi\psi)(\q
C)\\
&=\frac1{|H|^3}(\pi'
\pi\phi\pi'\pi\psi\pi'\pi)(\q C)\\
&=\frac1{|H|}(\pi'  \phi^*\psi^*\pi)(\q C) =
\frac1{|H|}(\pi'
\psi^*\phi^*\pi)(\q C)\\
&=\frac1{|H|^3}(\pi'
\pi\psi\pi'\pi\phi\pi'\pi)(\q C)\\
&=\frac1{|H|^2}(\pi'
\pi\psi\pi'\pi\phi)(\q C)=\frac1{|H|}(\pi'
\pi\psi\phi)(\q C)=(\psi\phi)(\q C),
\end{align*}
so that $\phi$ and $\psi$ commute, as desired.
\end{proof}

We observe that Theorem \ref{autcyc} is false if the coefficient field $F$, or more generally the coefficient ring $R$, has nonzero characteristic:

\begin{example}\label{ex_nzchar}
Let $R$ have characteristic $n>0$. Set $G=Z_{4n}=\langle t\rangle$ and
$H=Z_n\leq G$. Define $S_H$ to be the S-ring $S=S_H \wedge S_{G/H}$
where $S_H$ is the trivial S-ring over $H$ and $S_{G/H}$ is the full
group algebra $R(G/H)$. Then $S$ has five basic sets
$$\{1\},Z_n-\{1\},T_1,T_2,T_3$$ where $T_i=t^iZ_n$. Then in $RG$ we have $\q T_i\q T_j=nt^{i+j}\q Z_n=0$ for all
$i,j\in\{1,2,3\}$ while $(\q Z_n-1)\q T_i = -\q T_i$. Thus $\Aut(S)
\cong \Sym_3$ is non-abelian. Over a ring with characteristic zero,
this same Schur partition gives an S-ring with automorphism group
isomorphic to $Z_2$.
\end{example}

We observe that the Leung-Man classification of S-rings over cyclic groups does not hold if the coefficient field has nonzero characteristic (see, e.g., \cite[Example 1.11]{kerby_masters}), but Example \ref{ex_nzchar} shows that this is not the only reason that Theorem \ref{autcyc} fails in this case. This suggests two problems, both of which we can expect to be difficult:

\begin{problem}
Classify the S-rings over cyclic groups for coefficient rings of nonzero characteristic.
\end{problem}

\begin{problem}
Describe the automorphism groups of such S-rings.
\end{problem}

\section{Converse to Muzychuk's Theorem}\label{sec_conv}
A remarkable theorem of Muzychuk states:

\begin{thm}[\cite{muzychuk2}]\label{cyciso}Two S-rings over a cyclic group $Z_n$ are isomorphic if and only if they are identical. \end{thm}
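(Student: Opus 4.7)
The ``if'' direction is immediate: the identity map on $FZ_n$ restricts to an S-ring isomorphism $S \to S$ for any S-ring $S$. All the content is in the ``only if'' direction. Suppose $\phi : S_1 \to S_2$ is an S-ring isomorphism between two S-rings over $Z_n$; the goal is to prove $S_1 = S_2$ as subalgebras of $FZ_n$. My strategy is to reduce to the following \emph{multiplier rigidity} statement, which is precisely the generalization of Lemma \ref{localstr} alluded to in the paper's remark:
\[
(\ast)\quad \text{for every basic set } T \text{ of } S_1, \ \exists\, m \text{ with } \gcd(m,n)=1 \text{ and } \phi(T) = T^{(m)}.
\]
Granted $(\ast)$, Lemma \ref{lemwie} says that $T^{(m)}$ is again a basic set of $S_1$, so every basic set of $S_2$ is already a basic set of $S_1$. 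Since $\phi$ induces a bijection between basic quantities, a counting argument forces the two Schur partitions to coincide, giving $S_1 = S_2$.

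To establish $(\ast)$, I would induct on $n$, using the Leung-Man classification as the structural scaffold. The preliminary observation, from Theorem \ref{isosimp}(ii) and the uniqueness of subgroups of each order in $Z_n$, is that $S_1$ and $S_2$ have exactly the same S-subgroups, with $\phi$ fixing each one setwise. In the dot-product case $S_1 = S_H \cdot S_K$, where $|H|$ and $|K|$ are coprime divisors of $n$, $\phi$ restricts to S-ring isomorphisms on the $H$- and $K$-components; the inductive hypothesis supplies multipliers $m_H$ and $m_K$ on each factor, and the Chinese Remainder Theorem lifts them to a single multiplier $m$ modulo $n$. The wedge-product case $S_1 = S_K \wedge S_{G/H}$ is analogous but more delicate: one produces the induced map $\phi^\ast$ on the quotient $S_{G/H}$ (constructed exactly as in the proof of Theorem \ref{autcyc}) and combines it with the restriction $\phi|_{S_K}$; induction supplies multipliers on the subring and on the quotient, and a lifting argument (again exploiting the fact that $Z_n$ is cyclic) produces the desired $m$ on all of $Z_n$.

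The main obstacle is the cyclotomic base case. Here the difficulty is that Lemma \ref{localstr} as stated only handles automorphisms of a fixed cyclotomic S-ring, whereas one must allow an a priori unknown target $S_2$. The plan is to observe that the stabilizer subgroup
\[
\Omega(S) := \{m \in (\Z/n\Z)^\ast : T^{(m)} = T \text{ for every basic set } T \text{ of } S\}
\]
is an algebraic invariant: using Lemma \ref{autexp}, any iso $\phi : S_1 \to S_2$ yields $\Omega(S_1) = \Omega(S_2)$. For a cyclotomic $S_1$ this invariant coincides with the original defining subgroup, so $S_2$ is forced to be cyclotomic with the same $\Omega$, after which Lemma \ref{localstr} applied to $\phi$ (now an automorphism of the common cyclotomic S-ring) yields $(\ast)$. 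Making this last reduction fully rigorous --- in particular, ruling out that a non-cyclotomic $S_2$ could share the invariant $\Omega$ with a cyclotomic $S_1$ --- is the technical crux, and is essentially the content of the deeper local-structure theorem alluded to in the remark following Lemma \ref{localstr}.
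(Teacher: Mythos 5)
First, a caveat: the paper offers no proof of Theorem \ref{cyciso} at all; it is quoted from \cite{muzychuk2} precisely because its proof lies outside the paper's scope. So there is no internal proof to compare yours against, and I assess it on its own terms. Your reduction of the theorem to the multiplier statement $(\ast)$ is correct: given $(\ast)$, Lemma \ref{lemwie} makes every basic set of $S_2$ a basic set of $S_1$, and two Schur partitions of $Z_n$, one of which is contained in the other, must coincide. The dot-product and wedge-product steps of your induction can also be pushed through (one must check that $\phi$ carries $S_1\cap FH$ isomorphically onto $S_2\cap FH$ for each $S_1$-subgroup $H$, using Theorem \ref{isosimp} and the uniqueness of subgroups of each order in $Z_n$, and that multipliers lift coprimely via the Chinese Remainder Theorem). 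The fatal problem is the cyclotomic base case, exactly where you place it: you close that case by invoking ``the deeper local-structure theorem'' of the remark following Lemma \ref{localstr}, i.e.\ Theorem 1.1$'$ of \cite{muzychuk2}. But that theorem \emph{is} the statement $(\ast)$ with the cyclotomic hypothesis removed --- it is the very assertion your induction is designed to establish, and it is the engine of Muzychuk's own proof of Theorem \ref{cyciso}. If you may assume it, the entire induction is superfluous; if you may not, your base case is unproven. As written, the argument is circular, not merely incomplete.

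The gap is, however, closable --- and by your own invariant, so your pessimism about the base case is misplaced. Let $S_1$ be cyclotomic with defining group $\Omega\leq(\Z/n\Z)^\ast$, so that the basic sets of $S_1$ are exactly the $\Omega$-orbits. You first need the isomorphism form of Lemma \ref{autexp} (the paper states it only for automorphisms, but the standard proof carries over verbatim: an S-ring isomorphism permutes basic quantities, hence preserves the $\Z$-span of the basic quantities, and the Frobenius congruence $x^p\equiv x^{(p)}\pmod p$ together with Dirichlet's theorem on primes in the progression $p\equiv m\pmod n$ gives $\phi(C^{(m)})=\phi(C)^{(m)}$; this is the ``more general version'' recorded as \cite[Proposition 3.1]{muzychuk2}). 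Then for every basic set $T$ of $S_1$ and every $m\in\Omega$ one gets $\phi(T)^{(m)}=\phi(T^{(m)})=\phi(T)$, so every basic set of $S_2$ is a union of $\Omega$-orbits. Now count: the number of basic sets of $S_2$ equals $\dim S_2=\dim S_1$, which is the number of $\Omega$-orbits; a partition of $Z_n$ into that many parts, each a union of $\Omega$-orbits, must be the orbit partition itself. Hence $S_2=S_1$, so $\phi\in\Aut(S_1)$, and Lemma \ref{localstr} delivers $(\ast)$. In particular there is no need to ``rule out that a non-cyclotomic $S_2$ could share the invariant $\Omega$'': sharing the invariant, plus the dimension count, already forces $S_2=S_1$. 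With this repair your outline becomes a complete proof --- one resting on the Leung-Man classification rather than on Muzychuk's Theorem 1.1$'$, and therefore genuinely different from the original.
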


We prove a converse to this result. But first we need the following lemma:

\begin{lemma}\label{lemnonchar}
Let $G$ be a finite group which is not cyclic. Then $G$ has a subgroup which is not characteristic.
\end{lemma}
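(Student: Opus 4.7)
I will prove the contrapositive: if every subgroup of $G$ is characteristic, then $G$ is cyclic. Since inner automorphisms are automorphisms, the hypothesis forces every subgroup to be normal, so $G$ is Dedekind. If $G$ is non-abelian Dedekind (Hamiltonian), the classical structure theorem gives $G \cong Q_8 \times A$ for some abelian $A$. In $Q_8$ the three cyclic subgroups $\langle i\rangle,\langle j\rangle,\langle k\rangle$ of order $4$ are permuted transitively by $\Aut(Q_8)$, and extending such a permuting automorphism by the identity on $A$ yields an automorphism of $G$ that does not preserve $\langle i\rangle \times 1$, contradicting characteristicness. So $G$ must be abelian.

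Write $G = \prod_p G_p$ as the product of its primary components. Each $G_p$ is characteristic in $G$, and each automorphism of $G_p$ extends to one of $G$ by acting as the identity on the remaining factors; thus a subgroup of $G_p$ is characteristic in $G$ if and only if it is characteristic in $G_p$. Since an abelian group is cyclic iff each of its primary components is, the problem reduces to showing that every non-cyclic finite abelian $p$-group has a non-characteristic subgroup.

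Write such a group as $G = Z_{p^{\lambda_1}} \times \cdots \times Z_{p^{\lambda_n}}$ with $\lambda_1 \leq \cdots \leq \lambda_n$ and $n \geq 2$, and consider the map
$$\phi(x_1,x_2,x_3,\ldots,x_n) = \bigl(x_1,\ x_2 + p^{\lambda_2-\lambda_1}x_1,\ x_3,\ \ldots,\ x_n\bigr).$$
This is well-defined because $p^{\lambda_2-\lambda_1}\cdot p^{\lambda_1} \equiv 0 \pmod{p^{\lambda_2}}$, and it is invertible (negate the correction term), so $\phi \in \Aut(G)$. Applying $\phi$ to the generator $(p^{\lambda_1-1},0,\ldots,0)$ of the order-$p$ subgroup $H$ yields $(p^{\lambda_1-1},\ p^{\lambda_2-1},\ 0,\ \ldots,\ 0)$, whose second coordinate is nonzero; hence $\phi(H) \neq H$, so $H$ is not characteristic.

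The main obstacle is really the case-split at the non-abelian Dedekind setting, which rests on the classification of Hamiltonian groups; once we have reduced to the abelian case and further to a single primary component, constructing the explicit shear $\phi$ is routine, and the remaining verifications (well-definedness, invertibility, and the computation of $\phi$ on the chosen generator) are immediate.
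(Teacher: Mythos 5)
Your proof is correct and follows essentially the same path as the paper's: assume all subgroups are characteristic, reduce to the abelian case via the classification of Hamiltonian groups ($Q_8\times A$, where a transitively-permuting automorphism of $Q_8$ extended by the identity gives the contradiction), and then exhibit an explicit automorphism of a non-cyclic abelian $p$-part that moves some subgroup. The only cosmetic difference is the witness in the abelian case --- the paper sends a generator $s$ of one cyclic factor to $ts$ and moves $\langle s\rangle$, while you first split off the primary component and use a shear moving the order-$p$ subgroup of the smallest factor --- but both are routine and valid.
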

\begin{proof}
By way of contradiction, suppose every subgroup of $G$ is
characteristic. Then in particular every subgroup of $G$ is normal.
If $G$ is non-abelian, then $G$ is a Hamiltonian group (i.e., a nonabelian group in which every subgroup is normal) and we may write $G = Q \times A$ where $Q$ is an
8-element quaternion group $\langle i,j\rangle$ and $A$ is abelian
\cite[9.7.4]{scott}. But in this case $\langle i \rangle$ is a
subgroup of $G$ which is not characteristic, since there is an
automorphism of $Q$ mapping $\langle i \rangle$ to $\langle j
\rangle$ and this automorphism extends to an automorphism of $G$.
Therefore $G$ must be abelian.

Since $G$ is not cyclic, some Sylow $p$-subgroup of $G$ is not
cyclic and, by the Fundamental Theorem of finitely-generated abelian
groups, we may write $G = \langle t \rangle \times \langle s \rangle
\times A$ where $|t|=p^a$ and $|s|=p^b$ for some $a$ and $b$ where
$1 \leq a \leq b$. Then $\langle s \rangle$ is not characteristic,
since an automorphism $\phi$ is determined by setting $\phi(s)=ts$,
$\phi(t)=t$, and $\phi(a)=a$ for all $a \in A$.
\end{proof}

We remark that, by a similar method of proof, Lemma \ref{lemnonchar}
may be extended to infinite non-abelian groups and to finitely
generated abelian groups. However, there are non-cyclic infinitely
generated abelian groups in which every subgroup is characteristic,
an example being the direct sum $\underset{\text{$p$ prime}}\sum
Z_p$.

\begin{thm}\label{thm_conv}
Let $G$ be a finite group which is not cyclic. Then there exist distinct Cayley-isomorphic S-rings $S_1$ and $S_2$ over $G$.
\end{thm}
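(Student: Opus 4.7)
The plan is to use Lemma \ref{lemnonchar} to obtain a non-characteristic subgroup $H$ of $G$ and then associate to $H$ and to an automorphic image $H' := \phi(H) \neq H$ a pair of very simple (rank-three) S-rings which must then coincide if and only if $H = H'$. Since $\{1\}$ and $G$ are characteristic, $H$ is necessarily a proper nontrivial subgroup, and the existence of $\phi\in\Aut(G)$ with $\phi(H)\neq H$ is simply the statement that $H$ is not $\Aut(G)$-invariant.

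For any proper nontrivial subgroup $K$ of $G$, I claim that $\{\{1\},K\setminus\{1\},G\setminus K\}$ is a Schur partition of $G$. The span of the corresponding basic quantities equals $F\{1,\q K,\q G\}$, which is closed under multiplication since $\q K^2 = |K|\q K$, $\q K\,\q G = |K|\q G$, and $\q G^2 = |G|\q G$; closed under the Hadamard product because the basic quantities have pairwise disjoint supports; and closed under $^{(-1)}$ because $K$ is a subgroup. By Corollary \ref{folklore_cor} this defines an S-ring. Let $S_1$ and $S_2$ be the S-rings obtained in this way from $K = H$ and $K = H'$ respectively. The Cayley isomorphism induced by $\phi$ sends $1\mapsto 1$, $\q H\mapsto \q{H'}$, $\q G\mapsto \q G$, so it carries $S_1$ isomorphically onto $S_2$; hence $S_1$ and $S_2$ are Cayley-isomorphic.

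What remains, and is the only nontrivial step, is to verify $S_1\neq S_2$, equivalently that $\q H\notin F\{1,\q{H'},\q G\}$. Here I will invoke the classical fact that a group is never the union of two subgroups unless one contains the other. Since $|H| = |H'|$ and $H\neq H'$, neither subgroup contains the other, so $H\setminus H'$, $H'\setminus H$, and $G\setminus(H\cup H')$ are all nonempty. If $\q H = a + b\q{H'} + c\q G$ for some $a,b,c\in F$, then reading the coefficient at an element of $H\setminus H'$ gives $c = 1$, while reading the coefficient at an element of $G\setminus(H\cup H')$ gives $c = 0$, a contradiction. Thus $\q H\notin F\{1,\q{H'},\q G\}$, so $S_1\neq S_2$, completing the proof. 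The main (small) obstacle is this coefficient argument, which depends only on the standard union-of-subgroups fact and the observation that $|H| = |H'|$ forces mutual non-containment.
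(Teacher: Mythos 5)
Your proposal is correct and follows essentially the same route as the paper: the same rank-three S-rings $F\{1,\q H,\q G\}$ and $F\{1,\q{\phi(H)},\q G\}$ built from a non-characteristic subgroup obtained via Lemma \ref{lemnonchar}, with distinctness ultimately resting on the fact that a group is never the union of two proper subgroups. The only cosmetic difference is that you establish $\q H \notin F\{1,\q{\phi(H)},\q G\}$ by a coefficient comparison at a witness element of $G\setminus(H\cup\phi(H))$, whereas the paper compares the basic quantities of the two S-rings directly.
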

\begin{proof}
By Lemma \ref{lemnonchar}, let $H$ be a subgroup of $G$ which is not
characteristic. Choose some $\phi \in \Aut(G)$ such that $\phi(H)
\neq H$. Then $S_1=F\{1,\q H,\q G\}$ and $S_2=F\{1,\q{\phi(H)},\q G\}$
are S-rings over $G$ which are Cayley-isomorphic. We only need to
verify that they are distinct. The basic quantities of $S_1$ are
$\{1,\q H-1,\q G-\q H\}$ while the basic quantities of $S_2$ are
$\{1,\q{\phi(H)}-1,\q G-\q{\phi(H)}\}$. If $S_1=S_2$ then the basic
quantities of the two S-rings must be the same (in some order), so
either $\q H-1=\q{\phi(H)}-1$ or $\q H-1=\q G-\q{\phi(H)}$. The former
is impossible since $H \neq \phi(H)$. The latter would imply $G=H
\cup \phi(H)$, which is impossible, since no group is the union of
two proper subgroups.
\end{proof}

\section{Some examples}\label{sec_ex}

\begin{example}\label{nonlatsring}
Let $G=Z_p\times Z_{p^3}$ for an odd prime $p$. Let $H_1,\dots,H_6$ be the characteristic subgroups of $G$, in the order shown in Table \ref{char13}. Using Theorem \ref{folklore}, it is easy to check that $S=F\{1,\q H_2,\q H_3+\q H_4,\q H_5,\q G\}$ is a rational S-ring. We show that $S$ cannot be constructed as in Theorem \ref{latsring}. Suppose $S=F\q{\m L}$ for some lattice $\m L$. By Corollary \ref{charind}, the elements $\{\q H : H \in \m L\}$ are linearly independent, hence form a basis for $S$. So $\dim S=|\m L|$. Now $\dim S=5$, yet, by applying Corollary \ref{charind} again, it is easy to see that $1, H_2, H_5$, and $G$ are the only four subgroups of $G$ which are $S$-sets, hence $|\m L|\leq 4$, a contradiction.
\end{example}

Over other abelian $p$-groups, it is easy to construct many rational S-rings similar to Example \ref{nonlatsring}, where the basic quantities are sums of characteristic subgroups, chosen in such a way as to ensure closure under the Hadamard and ordinary product; but, thus far, it has not been possible to extend this construction to give a complete classification of rational S-rings over abelian $p$-groups. Some of the difficulty is indicated by the following example:

\begin{example}
Let $G=Z_p\times Z_{p^3}\times Z_{p^5}$ where $p$ is any prime. Let $H_1,\dots,H_{18}$ be the regular characteristic subgroups of $G$, as in Table~\ref{char135}. By direct computation, one can show using Corollary~\ref{folklore_cor} that $$S=F\{1,\q H_5,\q H_6+\q H_7-\q H_{8}-\q H_{11},\q H_8+3\q H_{11}-\q H_{12}-\q H_{13},\q H_{14},\q G\}$$ is an S-ring over $G$ if and only if $p=3$. It is of course also possible to present $S$ in terms of its basic elements:
$$S=F\{1,\q O_2+\q O_3+\q O_4+\q O_5,\q O_6+\q O_7+\q O_{14},
\q O_{12}+\q O_{13},\q O_8+\q O_{10}+\q O_{11},\q O_9+\q O_{15}+\q O_{16}+\q O_{17}+\q O_{18}\},$$
where $O_i=O(\h a)$ for $H_i=R(\h a)$. This S-ring does not have a basis consisting of sums of characteristic subgroups. This example also shows that choice of prime $p$ can make a difference in determining whether a partition of automorphism classes of $G$ is a Schur partition or not, and that it is not merely a question of whether $p=2$. For a further discussion of this and related examples, see \cite[Example 5.28]{kerby_masters} (But note that the initial basis given there for $S$ is erroneous; we have given a correct basis above).
\end{example}

Such examples lead us to a conjecture:

\begin{conj}
Let $(\lambda_1,\dots,\lambda_n)$ be a tuple of integers, $1 \leq \lambda_1 \leq \cdots \leq \lambda_n$. Then, as $p \to \infty$, the set of Schur partitions of the abelian $p$-group
$$G=Z_{p^{\lambda_1}} \times \cdots \times Z_{p^{\lambda_n}}$$
is eventually constant, and for sufficiently large $p$, every rational S-ring over $G$ has a basis consisting of sums of characteristic subgroups (where, here we are abusing notation slightly by thinking of the Schur partitions as partitions of $\m C(G)$, rather than of $G$ itself).

\end{conj}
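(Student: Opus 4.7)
The plan is to view $\m W(G_p)$ as a family of algebras parametrized by the (odd, for $p$ large) prime $p$, with the combinatorial lattice $\m C$ of canonical tuples fixed. For each odd $p$, Corollary \ref{corcong} gives $\Char(G_p)\cong\m C$ via $\h a\mapsto R(\h a)$, and since $R(\h a)=\bigsqcup_{\h a'\leq\h a,\,\h a'\in\m C}O(\h a')$, the change-of-basis matrix between $\{\q{R(\h a)}\}$ and $\{\q{O(\h a)}\}$ is the zeta function of $\m C$, which is $p$-independent. Theorem \ref{latsring}(iii) gives $\q{R(\h a)}\cdot\q{R(\h b)}=p^{\sigma(\h a,\h b)}\q{R(\h a\vee\h b)}$ with $\sigma(\h a,\h b)=\sum_i\min(a_i,b_i)$, while $\q{R(\h a)}\circ\q{R(\h b)}=\q{R(\h a\wedge\h b)}$. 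Consequently, the structure constants of $\m W(G_p)$ in either basis are integer polynomials in $p$.

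For the first part, I would fix a partition $\Pi=\{\Pi_1,\dots,\Pi_k\}$ of $\m C$ with $\{\h 0\}\in\Pi$, set $e_i^{(p)}=\sum_{\h a\in\Pi_i}\q{O(\h a)}$, and observe that $\Pi$ defines a rational Schur partition over $G_p$ iff each product $e_i^{(p)}e_j^{(p)}$ equals $\sum_k c_{ij}^k e_k^{(p)}$ with $c_{ij}^k\in\Z_{\geq 0}$. Expanding $e_i^{(p)}e_j^{(p)}=\sum_{\h c}\gamma_{ij}^{(p)}(\h c)\q{O(\h c)}$ where each $\gamma_{ij}^{(p)}(\h c)\in\Z[p]$, the Schur condition unwinds to the polynomial identities $\gamma_{ij}^{(p)}(\h c)=\gamma_{ij}^{(p)}(\h c')$ for $\h c,\h c'$ in the same part, plus non-negativity of the common value. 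Each identity either holds for all $p$ or for only finitely many $p$, and each sign condition is eventually determined by a leading coefficient. Since there are only finitely many partitions $\Pi$ of $\m C$, the set of Schur partitions of $G_p$ is eventually constant as $p\to\infty$.

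For the second part, fix a stable Schur partition $\Pi$ and let $\m L_\Pi\subseteq\Char(G)$ denote the ($p$-independent) sublattice of $R(\h a)$ for which $\{\h b\leq\h a:\h b\in\m C\}$ is a union of parts of $\Pi$; then $F\q{\m L_\Pi}\subseteq S_\Pi^{(p)}$. I would then aim to extend $\q{\m L_\Pi}$ to a basis of $S_\Pi^{(p)}$ using sums $\sum_{\h a\in B}\q{R(\h a)}$ for suitable $B\subseteq\m C$. Motivated by Example \ref{nonlatsring}, where the extra basis element $\q H_3+\q H_4=\q{R(0,2)}+\q{R(1,1)}$ sums exactly over the canonical tuples of the merged part, I would attempt to show that such collapsed-subset sums always suffice for large $p$; verifying they span reduces to a linear-algebraic statement about the $p$-independent zeta matrix of $\m C$. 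The main obstacle is ruling out sporadic rational S-rings like the $p=3$ example in the paper, whose basic quantities carry coefficients such as $3$ or $-1$ on $\q{R(\h a)}$'s and resist any sum-of-characteristic-subgroups rewriting. My intended attack is a degree argument: if the coefficient of some $\q{R(\h a)}$ in a basic quantity depended non-trivially on $p$, then the integrality and non-negativity of the $c_{ij}^k$ (themselves polynomials in $p$) should force a polynomial contradiction for large $p$ unless the coefficient pattern stabilizes to a $0$--$1$ indicator, giving the desired basis. Casting this in the framework of a generic algebra $\m W$ over $\Z[p]$ and classifying its rational Schur subalgebras, which I expect mirror the sublattice construction of Theorem \ref{latsring} together with natural orbit-sum extensions, seems a promising route.
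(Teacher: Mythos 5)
This statement is a \emph{conjecture} in the paper; no proof is given, so there is nothing to compare your attempt against, and I will assess it on its own terms. Your argument for the first clause is essentially sound and, written out carefully, would actually settle that half of the conjecture. The key points all check out: for odd $p$ the automorphism classes are indexed by the $p$-independent lattice $\m C$ of canonical tuples (Theorem \ref{unican}), each $\q{R(\h a)}$ with $\h a$ canonical equals $\sum_{\h c \leq \h a,\,\h c\in\m C}\q{O(\h c)}$ (as in the proof of Corollary \ref{charind}), so the change of basis between $\q{\Char(G)}$ and the automorphism-class sums is the zeta/M\"obius matrix of $\m C$, which does not depend on $p$; and $\q{R(\h a)}\,\q{R(\h b)}=p^{\sum_i\min(a_i,b_i)}\,\q{R(\h a\vee\h b)}$ by Theorems \ref{latsring}(iii) and \ref{Rlat}. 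Hence each $\gamma_{ij}^{(p)}(\h c)$ is the value at $p$ of a fixed integer polynomial, and for a fixed partition $\Pi$ of $\m C$ with $\{\h 0\}$ a singleton part, closure under $\circ$ and $^{(-1)}$ and membership of $1$ and $\q G$ are automatic, so $\Pi$ is a Schur partition if and only if finitely many identities $\gamma_{ij}^{(p)}(\h c)=\gamma_{ij}^{(p)}(\h c')$ hold; each holds identically or for only finitely many $p$, and there are finitely many $\Pi$. (The non-negativity condition you impose is redundant, since products of non-negative elements of $FG$ have non-negative coefficients.)

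The genuine gap is in the second clause. Your reduction is correct: for a stably-Schur partition $\Pi$ one must exhibit $|\Pi|$ subsets $B\subseteq\m C$ such that $\h c\mapsto|\{\h a\in B:\h a\geq\h c\}|$ is constant on the parts of $\Pi$ and the resulting elements $\sum_{\h a\in B}\q{R(\h a)}$ are linearly independent. But this is a purely combinatorial assertion about $\Pi$ and the poset $\m C$: the coordinates of the basic quantities $e_k$ in the $\q R$-basis are already $p$-independent integers (images of $0$--$1$ vectors under the M\"obius matrix), so there is no remaining dependence on $p$ for a ``degree argument'' to exploit --- that line of attack is aimed at the wrong quantity. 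What your stabilization result buys is only that sporadic examples such as the $p=3$ S-ring over $Z_p\times Z_{p^3}\times Z_{p^5}$ are excluded for large $p$; it says nothing about whether the surviving partitions admit $0$--$1$ bases in the $\q R$-coordinates, which is precisely the content of the second half of the conjecture. That claim remains unproved in your proposal, as you partly acknowledge, so the proposal establishes at most the first clause.
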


One natural approach to classifying rational S-rings over abelian $p$-groups would involve (1) answering this conjecture, (2) giving an explicit description of which sums of characteristic subroups are allowed, and (3) attempting to understand the apparently ``exceptional" rational S-rings which occur when $p$ is small relative to $n$.

\bibliographystyle{plain}
\bibliography{automorphisms_of_schur_rings}

\end{document}